\newtheoremstyle{bolddef}{}{}{\normalfont}{}{\bfseries}{.}{ }{\thmname{#1}\thmnumber{ #2}\thmnote{ (#3)}}
\newtheoremstyle{boldplain}{}{}{\itshape}{}{\bfseries}{.}{ }{\thmname{#1}\thmnumber{ #2}\thmnote{ (#3)}}
\theoremstyle{bolddef}
\newtheorem{definition}{Definition}[section]
\newtheorem{algorithm}[definition]{Algorithm}
\newtheorem{assumption}[definition]{Assumption}
\newtheorem{remark}[definition]{Remark}
\theoremstyle{boldplain}
\newtheorem{lemma}[definition]{Lemma}
\newtheorem{theorem}[definition]{Theorem}
\newcommand{\R}{\mathbb{R}}
\newcommand{\N}{\mathbb{N}}
\newcommand{\Trp}{\mathsf{T}}
\newcommand{\minimize}{\operatornamewithlimits{minimize}}
\newcommand{\mvec}[1]{\mathbf{#1}}
\newcommand{\mmat}[1]{\mathbf{#1}}
\newcommand{\RegParam}{\mu}
\newcommand{\InitialB}{\mmat{B}_{0,k}}
\newcommand{\InitialBhat}{\hat{\mmat{B}}_{0,k}}
\begin{document}

\title{\bfseries\scshape Regularization of Limited Memory Quasi-Newton Methods for Large-Scale Nonconvex Minimization}

\date{June 5, 2022}

\author{
    Christian Kanzow$^*$\and Daniel Steck%
    \thanks{University of W\"urzburg, Institute of Mathematics, Campus Hubland Nord, Emil-Fischer-Str.\ 30, 97074 Würzburg, Germany;
    \href{mailto:kanzow@mathematik.uni-wuerzburg.de}{\nolinkurl{kanzow@mathematik.uni-wuerzburg.de}}; \href{mailto:mail@danielsteck.com}{\nolinkurl{mail@danielsteck.com}}}
}

\maketitle

{
\small\textbf{\abstractname.}
This paper deals with regularized Newton methods, a flexible class of unconstrained optimization algorithms that is competitive with line search and trust region methods and potentially combines attractive elements of both. The particular focus is on combining regularization with limited memory quasi-Newton methods by exploiting the special structure of limited memory algorithms. Global convergence of regularization methods is shown under mild assumptions and the details of regularized limited memory quasi-Newton updates are discussed including their compact representations.
Numerical results using all large-scale test problems from the CUTEst collection indicate that our regularized version of L-BFGS is competitive with state-of-the-art line search and trust-region L-BFGS algorithms and previous attempts at combining L-BFGS with regularization, while potentially outperforming some of them, especially when nonmonotonicity is involved.
\par\addvspace{\baselineskip}
}

{
\small\textbf{Keywords.}
Limited memory methods, quasi-Newton methods, L-BFGS, regularized Newton methods, global convergence, large-scale optimization.
\par\addvspace{\baselineskip}
}

{
\small\textbf{AMS subject classifications.}
49M, 65K, 90C.
\par\addvspace{\baselineskip}
}

\section{Introduction}

Let $f:\R^n\to\R$, $n\in\N$, be a twice continuously differentiable function, and consider the nonlinear minimization problem
\begin{equation}\label{Eq:Opt}
    \minimize_{\mvec{x}\in\R^n}\, f(\mvec{x}).
\end{equation}
Methods of Newton or quasi-Newton type are commonly acknowledged to be some of the most efficient algorithms for the solution of such problems.
Given a current iterate $\mvec{x}_k$, these methods compute the iteration step $\mvec{d}_k$ by solving a \emph{(quasi-)Newton equation} of the form
\begin{equation}\label{Eq:QuasiNewtonEq}
    \mmat{B}_k \mvec{d}_k = -\nabla f(\mvec{x}_k),
\end{equation}
where $\mmat{B}_k\in\R^{n\times n}$ is either the Hessian $\nabla^2 f(\mvec{x}_k)$ or an approximation thereof.
When $n$ is large, the matrix $\mmat{B}_k$ is usually not stored explicitly.
Instead, one uses so-called \emph{limited memory} quasi-Newton methods, which require the storage of a few vector pairs
\begin{equation*}
    \mvec{s}_k :=\mvec{x}_{k+1}-\mvec{x}_k, \qquad
    \mvec{y}_k :=\nabla f(\mvec{x}_{k+1})-\nabla f(\mvec{x}_k),
\end{equation*}
and use this information to construct an implicit approximation to the Hessian matrix.
This approximation is never formed explicitly; instead, the pairs $(\mvec{s}_k,\mvec{y}_k)$ are used to directly evaluate matrix--vector products of the form $\mmat{B}_k \mvec{x}$ or $\mmat{B}_k^{-1} \mvec{y}$ as necessary.
Arguably the most successful quasi-Newton schemes are the Broyden--Fletcher--Goldfarb--Shanno (BFGS) method \cite{Dennis1977} and its limited memory counterpart L-BFGS \cite{Nocedal1980,Liu1989,Byrd1994}.
Other examples include symmetric rank-one (SR1), Powell-symmetric-Broyden (PSB), Davidon--Fletcher--Powell (DFP), the so called Broyden class, and many more; see \cite{Dennis1977,Wei2006,Liu2007}.

In today's optimization landscape, L-BFGS is the de facto standard for smooth large-scale optimization.
The method is usually combined with a line search technique to ensure global convergence \cite{Liu1989}.
There have also been efforts dedicated to making quasi-Newton methods compatible with the trust-region framework; see \cite{Burke2008,Burdakov2017,Erway2014} for L-BFGS and \cite{Brust2017} for L-SR1.
This is facilitated by the fact that most quasi-Newton formulas admit a so-called \emph{compact representation} of the form
\begin{equation}\label{Eq:CompactRepresentationIntro}
    \mmat{B}_k=\InitialB+\mmat{A}_k \mmat{Q}_k^{-1}\mmat{A}_k^{\Trp},
\end{equation}
where $\InitialB\in\R^{n\times n}$, $\mmat{A}_k\in\R^{n\times s},\mmat{Q}_k\in\R^{s\times s}$ and $s\ll n$.
(We put $\mmat{Q}_k^{-1}$ instead of $\mmat{Q}_k$ in the above equation because this will be more convenient later on.)
The initial matrix $\InitialB$ is usually a multiple of the identity or some other diagonal matrix.
Decompositions of the above form have been given by many authors \cite{Byrd1994,Burdakov2002,DeGuchy2018}, and they are immensely useful in optimization methods since they usually allow the computation of matrix operations involving $\mmat{B}_k$ in the lower dimension $s$.
In particular, they facilitate the efficient computation of quasi-Newton directions and the solution of trust-region subproblems; see the references above.

In this paper, we will pursue a different globalization technique which can be seen as a (less well-known) sibling of line search and trust-region methods, the so-called \emph{regularized Newton methods} \cite{Ueda2010,Ueda2014,Li2004,Zhang2018,Li2001a}.
These are generally characterized by regularized quasi-Newton equations of the form
\begin{equation*}
    (\mmat{B}_k+\RegParam_k \mmat{I}) \mvec{d}_k = -\nabla f(\mvec{x}_k),
\end{equation*}
where $\RegParam_k\ge 0$ is called the regularization parameter.
The attractive feature of these methods is that they combine some of the respective benefits of line search and trust region methods, and moreover they are highly compatible with compact representations of quasi-Newton matrices.
We will therefore present an algorithmic framework designed to efficiently combine limited memory and regularization techniques, with the following benefits:
\begin{itemize}
\item The step computation is almost as cheap as for line search L-BFGS algorithms.
More specifically, the cost of each successful iteration (in the $m$-step BFGS case) is $4 m n$ plus the solution of a $2m \times 2m$ symmetric linear system.
In particular, no inner loop is necessary for the computation of eigenvalue decompositions or trust-region solutions.
\item At the same time, the step quality is close to that of trust-region type limited memory algorithms because the regularization parameter $\mu_k$ mimics the Lagrange multiplier arising in trust-region subproblems.
The method can therefore be considered as a kind of ``implicit'' trust-region algorithm.
\item As a result of the above, the proportion of accepted steps is very high, leading to a relatively low number of function and gradient evaluations (on a level with trust-region type methods) while at the same time preserving the ``cheap'' steps of line search methods.
\end{itemize}
The use of regularization techniques has another important benefit over line search methods.
In the line search setting, many authors advocate trying the ``full'' step size $t_k=1$ first, the motivation being that L-BFGS and similar methods are fundamentally algorithms of Newton type and the full step size may lead to fast convergence. However, the step size also serves the purpose of adapting the algorithm to the nonlinearity of the problem, and re-initializing the line search procedure with $t_k=1$ at each step makes it hard to carry this information over from one step to the next. In contrast, the regularization approach that we advocate here provides a more seamless transition between the full (quasi-)Newton step and a truncated version thereof (similar to trust region methods), which suggests that algorithms of this type may be able to handle nonlinear or nonconvex problems more effectively.

The idea of combining limited memory and regularization techniques is not entirely new. Multiple authors \cite{Li2001a,Sugimoto2014,Schraudolph2007} have advocated modifying the secant equation in quasi-Newton methods to instead approximate the sum $\nabla^2 f(\mvec{x}_k)+\RegParam_k \mmat{I}$. However, none of these methods fully exploit the quasi-Newton approximation of the Hessian and the compact representation \eqref{Eq:CompactRepresentationIntro}. The method we present takes full advantage of these tools.

In addition to the algorithm, the paper also contains a general convergence result for regularized Newton methods which, to the authors' knowledge, does not exist in this generality in the literature. In particular, the convergence result does not assume any specific quasi-Newton formula and allows for $\mmat{B}_k + \mu_k \mmat{I}$ to be indefinite. This may be of interest to researchers in the field and provide a basis for future research on related methods.

This paper is organized as follows.
Section~\ref{Sec:Method} contains a detailed description of a general class of regularized quasi-Newton methods.
Global convergence results for this class of methods are presented in Section~\ref{Sec:Conv} under fairly mild assumptions.
In Section~\ref{Sec:Matrices}, we show how compact representations of limited memory quasi-Newton methods can be exploited to create efficient implementations of the algorithm.
We also give a compact representation of the PSB formula that appears to be new.
The numerical experiments in Section~\ref{Sec:Numerics} indicate that the new technique is competitive with other attempts at regularizing L-BFGS \cite{Sugimoto2014} as well as line search and trust region based L-BFGS methods \cite{Liu1989,Burdakov2017}.
We close with some final remarks in Section~\ref{Sec:Final}.

\subsubsection*{Notation}

Matrices and vectors will be denoted by boldface letters $\mmat{M}$ and $\mvec{v}$, respectively.
Given a matrix $\mmat{M}\in\R^{s\times s}$, we write $\mmat{L}(\mmat{M})$, $\mmat{D}(\mmat{M})$,
and $\mmat{U}(\mmat{M})$ for the \emph{strictly lower, diagonal,} and \emph{strictly upper} parts of $\mmat{M}$,
respectively. In particular, it always holds that
\begin{equation*}
    \mmat{M}= \mmat{L}(\mmat{M}) + \mmat{D}(\mmat{M}) + \mmat{U}(\mmat{M}).
\end{equation*}
The gradient of the smooth function $ f $ evaluated at an iterate $ \mvec{x}_k $ will often be denoted by $ \mvec{g}_k $. We denote sequences by $\{s_k\}$ and write $\{s_k\}_{k\in\mathcal{S}}$ for the subsequence induced by an infinite index set $\mathcal{S} = \{k_1, k_2, \ldots\}\subseteq\N$ with $k_i < k_{i+1}$ for all $i$. Similarly, $s_k \to_{\mathcal{S}} s$ means that $\{s_k\}_{k\in\mathcal{S}}$ converges to $s$.

\section{Regularized Quasi-Newton Methods}\label{Sec:Method}

As discussed in the introduction, the fundamental principle underlying the methods in this paper is that of regularized Newton and quasi-Newton methods, which are generally characterized by regularized quasi-Newton equations of the form
\begin{equation}\label{Eq:BasicRegularizedNewton}
    (\mmat{B}_k+\RegParam_k \mmat{I}) \mvec{d}_k = -\nabla f(\mvec{x}_k),
\end{equation}
where $\mmat{B}_k$ is either the Hessian $\nabla^2 f(\mvec{x}_k)$ or an approximation thereof, and $\RegParam_k\ge 0$ is the \emph{regularization parameter}. Clearly, if $\RegParam_k=0$, then \eqref{Eq:BasicRegularizedNewton} reduces to the standard quasi-Newton equation $\mmat{B}_k \mvec{d}_k = -\nabla f(\mvec{x}_k)$. On the other hand, if $\RegParam_k\gg 0$ is large, then the matrix $\mmat{B}_k+\RegParam_k \mmat{I}$ will be invertible, and the step $\mvec{d}_k$ produced by \eqref{Eq:BasicRegularizedNewton} will essentially be the negative gradient direction (up to normalization; see Lemma~\ref{Lem:AsymptoticGradient}).

\subsection{Mathematical Motivation}

The virtues of the regularization approach can be understood by recognizing that this essentially amounts to minimizing the regularized quadratic model
\begin{equation}\label{Eq:RegularizedQuadratic}
    \hat{q}_k(\mvec{d}):= f(\mvec{x}_k)+\mvec{g}_k^{\Trp}\mvec{d}+ \frac{1}{2}\mvec{d}^{\Trp}\mmat{B}_k\mvec{d}+\frac{\RegParam_k}{2}\|\mvec{d}\|^2,
\end{equation}
which differs from the conventional Newton model by Tikhonov regularization. Thus, a positive value of $\RegParam_k$ may dampen the impact of negative eigenvalues of $\mmat{B}_k$ on the search direction, prevent excessively long steps in negative curvature directions, and possibly guarantee that the model \eqref{Eq:RegularizedQuadratic} admits a unique minimizer (i.e., that the matrix $\mmat{B}_k+\RegParam_k\mmat{I}$ is positive definite). The anticipated setting is that $\RegParam_k$ will initially be kept sufficiently large to guarantee global convergence, eventually decreasing rapidly enough so as to not impede fast local convergence.

A more rigorous interpretation is given by trust-region methods. Indeed, if $\mvec{d}_k:=-(\mmat{B}_k+\RegParam_k\mmat{I})^{-1} \mvec{g}_k$ for some $\RegParam_k\ge 0$, and if $\Delta:=\|\mvec{d}_k\|$, then $\mvec{d}_k$ is a stationary point of the \emph{trust-region subproblem}
\begin{equation*}
    \minimize_{\|\mvec{d}\|\le \Delta}\, q_k(\mvec{d}),
\end{equation*}
where
\begin{equation}\label{Eq:Quadratic}
    q_k(\mvec{d}):=f(\mvec{x}_k)+\mvec{g}_k^{\Trp}\mvec{d}+\frac{1}{2}\mvec{d}^{\Trp}\mmat{B}_k\mvec{d}
\end{equation}
is the standard quadratic approximation of $f$ around $\mvec{x}_k$.
If $\mmat{B}_k+\RegParam_k \mmat{I}$ is positive definite, then $\mvec{d}_k$ is in fact a \emph{solution} of this auxiliary problem. It follows that regularized Newton methods can be interpreted as ``implicit'' trust-region methods whereby the regularization parameter is controlled instead of the trust-region radius.

Finally, it is also interesting to analyze how the regularization technique affects the conditioning of the quadratic model \eqref{Eq:RegularizedQuadratic}. Assuming for the moment that $\mmat{B}_k$ is positive definite (as it is, e.g., in BFGS-type methods), the regularization parameter improves the condition number of the underlying matrix in the sense that
\begin{equation*}
    \kappa(\mmat{B}_k+\RegParam_k \mmat{I})=\frac{\lambda_{\max}(\mmat{B}_k)+\RegParam_k}{\lambda_{\min}(\mmat{B}_k)+\RegParam_k} \le
    \frac{\lambda_{\max}(\mmat{B}_k)}{\lambda_{\min}(\mmat{B}_k)}=\kappa(\mmat{B}_k),
\end{equation*}
where $\lambda_{\max}(\mmat{B}_k),\lambda_{\min}(\mmat{B}_k)>0$ are the largest and smallest eigenvalues of $\mmat{B}_k$, respectively.

\subsection{Basic Algorithm}

To control the regularization parameter $\RegParam_k$, we consider the quadratic approximation $q_k$ of $f$ from \eqref{Eq:Quadratic} and borrow some terminology from trust-region algorithms.
Given a candidate step $\mvec{d}_k=-(\mmat{B}_k+\RegParam_k\mmat{I})^{-1}\mvec{g}_k$, define the \emph{predicted reduction} of $f$ as
\begin{equation}\label{Eq:PRed}
    \textnormal{pred}_k:=f(\mvec{x}_k)-q_k(\mvec{d}_k)
    = -\mvec{g}_k^{\Trp}\mvec{d}_k - \frac{1}{2}\mvec{d}_k^{\Trp}\mmat{B}_k \mvec{d}_k
    = \frac{\RegParam_k}{2}\|\mvec{d}_k\|^2-\frac{1}{2}\mvec{g}_k^{\Trp}\mvec{d}_k,
\end{equation}
where the last equality uses the definition of $\mvec{d}_k$. (Note that, in particular, the matrix $\mmat{B}_k$ need not be available for the computation of $\textnormal{pred}_k$.) This quantity will be compared to the \emph{actual} or \emph{achieved reduction} in step $k$,
\begin{equation}\label{Eq:ARed}
    \textnormal{ared}_k := f(\mvec{x}_k)-f(\mvec{x}_k+\mvec{d}_k).
\end{equation}
Similar to trust-region methods \cite{Conn2000}, we use the ratio between these quantities to control the regularization parameter. To this end, we distinguish between three cases, unsuccessful \textbf{(u)}, successful \textbf{(s)}, and highly successful \textbf{(h)} steps. Special care also needs to be taken because there is no a-priori guarantee that $\textnormal{pred}_k$ is positive (since $\mmat{B}_k$ may be indefinite); such steps are treated in the same manner as unsuccessful ones.

\begin{algorithm}[Regularized quasi-Newton method]\label{Alg:RegularizedQN}\leavevmode\\
    Choose $\mvec{x}_0\in\R^n$ and parameters $\RegParam_0>0$; $p_{\min},c_1\in (0,1)$; $c_2\in (c_1,1)$; $\sigma_1\in (0,1)$; $\sigma_2>1$.
\begin{enumerate}[topsep=1ex,parsep=0ex,leftmargin=*,label=\textbf{Step~\arabic*.}]
    \item If a suitable stopping criterion is satisfied, terminate.
    \item \textbf{(Step computation)} Choose $\mmat{B}_k\in\R^{n\times n}$ and attempt to solve the regularized
    quasi-Newton equation
\begin{equation}\label{Eq:RegularizedNewtonEq}
    (\mmat{B}_k+\RegParam_k \mmat{I}) \mvec{d}_k=-\nabla f(\mvec{x}_k).
\end{equation}
    If this equation admits no solution $\mvec{d}_k$, or if $\textnormal{pred}_k \le p_{\min} \|\mvec{g}_k\| \|\mvec{d}_k\|$, set $\mvec{x}_{k+1}:=\mvec{x}_k$, $\RegParam_{k+1}:=\sigma_2\RegParam_k$, and go to \emph{Step~4}. Otherwise, go to \emph{Step~3}.

\item \textbf{(Variable update)} Set $\varrho_k:=\textnormal{ared}_k/\textnormal{pred}_k$ and perform one of the following steps:

\textbf{Step~3u ($\varrho_k\le c_1$).} Set $\mvec{x}_{k+1}:=\mvec{x}_k$ and $\RegParam_{k+1}:=\sigma_2 \RegParam_k$.

\textbf{Step~3s ($c_1<\varrho_k\le c_2$).} Set $\mvec{x}_{k+1}:=\mvec{x}_k+\mvec{d}_k$ and $\RegParam_{k+1}:=\RegParam_k$.

\textbf{Step~3h ($c_2<\varrho_k$).} Set $\mvec{x}_{k+1}:=\mvec{x}_k+\mvec{d}_k$ and $\RegParam_{k+1}:=\sigma_1\RegParam_k$.

\item Set $k\leftarrow k+1$ and go to Step~1.
\end{enumerate}
\end{algorithm}

The condition $\textnormal{pred}_k > p_{\min} \|\mvec{g}_k\| \|\mvec{d}_k\|$ in Step~2 is a sufficient descent criterion similar to the angle condition in line search methods or the Cauchy condition in trust-region methods.
The quantity $\textnormal{pred}_k$ denotes the minimal expected reduction in objective value (relative to $\mvec{g}_k$ and $\mvec{d}_k$) for a step to be attempted.

As hinted above, in what follows, we will refer to a step as \emph{unsuccessful} if it passes through Step~3u or skips Step~3 because of the checks in Step~2. (In particular, $\mvec{d_k}$ may not be defined in an unsuccessful step.)

The parameters $c_1,c_2,\sigma_1, \sigma_2$ are used to classify steps and adjust the regularization accordingly (increase if the step was unsuccessful, decrease if the step was highly successful).

Algorithm~\ref{Alg:RegularizedQN} is closely related to trust-region methods.
The main difference between trust-region methods and our regularization framework lies in the update of the parameter $ \RegParam_k $.
The former uses an indirect way to compute $ \RegParam_k $ (via a trust-region radius), whereas here we update the regularization parameter directly.
While the indirect update follows a well-understood and well-motivated philosophy, its actual computation is sometimes time-consuming and costly.
We therefore expect superior behavior of the direct update, in particular for large-scale problems.

The report \cite{Sugimoto2014} presents a method which is formally almost identical (except for a slightly different update of the regularization parameter) to Algorithm~\ref{Alg:RegularizedQN}.
The main difference is that \cite{Sugimoto2014} focuses on the matrices $ \mmat{B}_k $ being updated by a limited memory BFGS scheme (without using compact representations, as we shall do in Section~\ref{Sec:Matrices}).
The convergence theory in \cite{Sugimoto2014} assumes a bounded level set condition; this is not required in our subsequent analysis, which is substantially more general since we only assume boundedness of $\{\mmat{B}_k\}$ (allowing for other quasi-Newton formulas or indefiniteness) and boundedness of the objective from below (consider, for example, the exponential function).


\section{General Convergence Analysis}\label{Sec:Conv}

As we shall see, Algorithm~\ref{Alg:RegularizedQN} provides a powerful framework for the application of
quasi-Newton type updates. Before turning to this discussion (which is the main motivation for this paper),
we shall dedicate the present section to a simple convergence analysis.
Due to the non-specificity of the algorithm in its general form, it will be convenient to carry out the convergence analysis under rather general assumptions. To this end, we shall make no assumption on the particular choice of the matrices $\mmat{B}_k$, which may or may not be approximations of the Hessian $\nabla^2 f(\mvec{x}_k)$. The only assumption we make throughout this section is the following.

\begin{assumption}[Boundedness]\label{Asm:Conv}
    $\{\mmat{B}_k\}\subseteq\R^{n\times n}$ is a bounded sequence.
\end{assumption}

Most practically relevant quasi-Newton schemes should have no issues satisfying the above assumption, especially when the gradient $\nabla f$ is Lipschitz continuous on an appropriate level set. Indeed, many of these techniques yield Hessian approximations which satisfy additional properties such as symmetry (which we omitted because it is unnecessary for the theory below) or positive definiteness.

\begin{lemma}[Gradient approximation]\label{Lem:AsymptoticGradient}
    Let Assumption~\ref{Asm:Conv} hold, and let $\RegParam_k\to \infty$. Then $\mmat{B}_k+\RegParam_k\mmat{I}$ is invertible for sufficiently large $k\in\N$, and
\begin{equation*}
    \lim_{k\to\infty} \frac{(\mmat{B}_k+\RegParam_k\mmat{I})^{-1}\mvec{z}}{\|(\mmat{B}_k+\RegParam_k\mmat{I})^{-1}\mvec{z}\|}=\frac{\mvec{z}}{\|\mvec{z}\|}
    \quad\text{for all }\mvec{z}\in\R^n\setminus\{0\}.
\end{equation*}
\end{lemma}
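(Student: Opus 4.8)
The plan is to exploit Assumption~\ref{Asm:Conv} to write $\mmat{B}_k + \RegParam_k\mmat{I} = \RegParam_k(\mmat{I} + \RegParam_k^{-1}\mmat{B}_k)$ and treat $\RegParam_k^{-1}\mmat{B}_k$ as a vanishing perturbation of the identity. Since $\{\mmat{B}_k\}$ is bounded, there is a constant $M > 0$ with $\|\mmat{B}_k\| \le M$ for all $k$, hence $\|\RegParam_k^{-1}\mmat{B}_k\| \le M/\RegParam_k \to 0$. First I would note that once $M/\RegParam_k < 1$, the Neumann series shows $\mmat{I} + \RegParam_k^{-1}\mmat{B}_k$ is invertible, and therefore so is $\mmat{B}_k + \RegParam_k\mmat{I}$; this handles the first assertion and covers all sufficiently large $k$.

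For the limit, I would fix $\mvec{z} \in \R^n \setminus \{0\}$ and set $\mvec{w}_k := (\mmat{B}_k + \RegParam_k\mmat{I})^{-1}\mvec{z}$, so that $(\mmat{B}_k + \RegParam_k\mmat{I})\mvec{w}_k = \mvec{z}$, i.e. $\RegParam_k \mvec{w}_k = \mvec{z} - \mmat{B}_k \mvec{w}_k$. The next step is to control $\|\mvec{w}_k\|$: from $\mvec{z} = (\mmat{B}_k + \RegParam_k\mmat{I})\mvec{w}_k$ one gets $\|\mvec{z}\| \ge (\RegParam_k - M)\|\mvec{w}_k\|$ (for $\RegParam_k > M$), so $\|\mvec{w}_k\| \le \|\mvec{z}\| / (\RegParam_k - M) \to 0$, and in particular $\|\mmat{B}_k \mvec{w}_k\| \le M\|\mvec{w}_k\| \to 0$. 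Dividing $\RegParam_k \mvec{w}_k = \mvec{z} - \mmat{B}_k\mvec{w}_k$ by $\|\RegParam_k \mvec{w}_k\| = \RegParam_k\|\mvec{w}_k\|$ gives
\begin{equation*}
    \frac{\mvec{w}_k}{\|\mvec{w}_k\|} = \frac{\mvec{z} - \mmat{B}_k\mvec{w}_k}{\|\mvec{z} - \mmat{B}_k\mvec{w}_k\|},
\end{equation*}
and since $\mmat{B}_k\mvec{w}_k \to 0$ while $\mvec{z} \ne 0$, the right-hand side converges to $\mvec{z}/\|\mvec{z}\|$ by continuity of the map $\mvec{v} \mapsto \mvec{v}/\|\mvec{v}\|$ on $\R^n \setminus \{0\}$. (One should also observe $\mvec{w}_k \ne 0$ for large $k$ since $\mvec{z} \ne 0$, so the normalization is legitimate.)

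I do not expect any serious obstacle here; the only points requiring a little care are making sure the denominators are nonzero for large $k$ (which follows from $\mvec{z}\ne 0$ and the estimates above) and keeping the argument uniform over $k$ rather than $\mvec{z}$ — the convergence is pointwise in $\mvec{z}$, which is all that is claimed. The boundedness assumption is used exactly once, to guarantee $\|\mmat{B}_k\mvec{w}_k\| \to 0$, and this is the crux of the whole argument.
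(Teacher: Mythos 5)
Your proof is correct and complete: the decomposition $\RegParam_k \mvec{w}_k = \mvec{z} - \mmat{B}_k\mvec{w}_k$ together with the bound $\|\mvec{w}_k\| \le \|\mvec{z}\|/(\RegParam_k - M)$ gives exactly what is needed, and the invertibility via the Neumann series is the standard argument. The paper actually states this lemma without proof, so there is nothing to compare against; your argument is the natural one the authors evidently had in mind, and it is also the same reasoning implicitly reused in the proof of Lemma~\ref{Lem:WellDefinedness} (where $\mvec{d}_k\to 0$ and $\RegParam_k\|\mvec{d}_k\|\to\|\mvec{g}_{k_0}\|$ are derived from precisely these estimates).
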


The above result defines more precisely the intuitive relationship mentioned in Section~\ref{Sec:Method}; that is, if the regularization parameter is sufficiently large, then the regularized Newton equation \eqref{Eq:RegularizedNewtonEq} admits a unique solution, and the resulting vector will approximate the negative gradient direction as $\RegParam_k\to\infty$.

Another consequence of Lemma~\ref{Lem:AsymptoticGradient} is that the method performs infinitely many
successful steps. This follows from the fact that $\mvec{d}_k$ becomes ever smaller and approaches the
(local) steepest descent direction when $\RegParam_k\to\infty$, thus leading to a local descent step
which satisfies the sufficient decrease condition from Step~2 of the algorithm.

\begin{lemma}[Well-definedness]\label{Lem:WellDefinedness}
    Let Assumption~\ref{Asm:Conv} hold, and assume that $\mvec{g}_k\ne 0$ for all $k$. Then Algorithm~\ref{Alg:RegularizedQN} performs infinitely many successful or highly successful steps.
\end{lemma}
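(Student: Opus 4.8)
The plan is to argue by contradiction: suppose that from some index $k_0$ onward, every step is unsuccessful (either because the regularized equation has no solution, or because $\textnormal{pred}_k \le p_{\min}\|\mvec{g}_k\|\|\mvec{d}_k\|$, or because $\varrho_k \le c_1$). Since each unsuccessful step triggers the update $\RegParam_{k+1} = \sigma_2 \RegParam_k$ with $\sigma_2 > 1$, this forces $\RegParam_k \to \infty$. Moreover, in this scenario the iterate never moves, so $\mvec{x}_k = \mvec{x}_{k_0} =: \bar{\mvec{x}}$ and $\mvec{g}_k = \nabla f(\bar{\mvec{x}}) =: \bar{\mvec{g}} \ne 0$ for all $k \ge k_0$. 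The goal is to show that for $k$ large enough, the step $\mvec{d}_k$ \emph{is} defined and \emph{does} satisfy $\textnormal{pred}_k > p_{\min}\|\mvec{g}_k\|\|\mvec{d}_k\|$ and $\varrho_k > c_1$, contradicting the assumption.

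First I would invoke Lemma~\ref{Lem:AsymptoticGradient} with $\mvec{z} = -\bar{\mvec{g}}$: since $\{\mmat{B}_k\}$ is bounded (Assumption~\ref{Asm:Conv}) and $\RegParam_k \to \infty$, the matrix $\mmat{B}_k + \RegParam_k \mmat{I}$ is invertible for large $k$, so $\mvec{d}_k = -(\mmat{B}_k + \RegParam_k\mmat{I})^{-1}\bar{\mvec{g}}$ is well defined, and $\mvec{d}_k / \|\mvec{d}_k\| \to -\bar{\mvec{g}}/\|\bar{\mvec{g}}\|$. I also note $\|\mvec{d}_k\| \to 0$: from $(\mmat{B}_k + \RegParam_k\mmat{I})\mvec{d}_k = -\bar{\mvec{g}}$ one gets $\RegParam_k \|\mvec{d}_k\| \le \|\bar{\mvec{g}}\| + \|\mmat{B}_k\|\|\mvec{d}_k\|$, hence $\|\mvec{d}_k\|(\RegParam_k - \|\mmat{B}_k\|) \le \|\bar{\mvec{g}}\|$, so $\|\mvec{d}_k\| = O(1/\RegParam_k) \to 0$. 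Next, for the $\textnormal{pred}_k$ condition I use the expression $\textnormal{pred}_k = \frac{\RegParam_k}{2}\|\mvec{d}_k\|^2 - \frac12 \bar{\mvec{g}}^\Trp \mvec{d}_k$ from \eqref{Eq:PRed}. Since the directional term dominates: $-\bar{\mvec{g}}^\Trp \mvec{d}_k = \|\mvec{d}_k\| \bigl(-\bar{\mvec{g}}^\Trp \frac{\mvec{d}_k}{\|\mvec{d}_k\|}\bigr) \to \|\mvec{d}_k\|\,\|\bar{\mvec{g}}\|$ in the sense that $-\bar{\mvec{g}}^\Trp \mvec{d}_k / (\|\bar{\mvec{g}}\|\|\mvec{d}_k\|) \to 1$, and the quadratic term $\frac{\RegParam_k}{2}\|\mvec{d}_k\|^2 \ge 0$, we obtain $\textnormal{pred}_k \ge -\frac12 \bar{\mvec{g}}^\Trp \mvec{d}_k \ge \frac12(1-\varepsilon)\|\bar{\mvec{g}}\|\|\mvec{d}_k\| > p_{\min}\|\bar{\mvec{g}}\|\|\mvec{d}_k\|$ for $k$ large, provided $\varepsilon$ is chosen with $\frac12(1-\varepsilon) > p_{\min}$, which is possible since $p_{\min} < 1$. (If $p_{\min} \ge 1/2$ a slightly more careful argument keeping the nonnegative $\RegParam_k\|\mvec{d}_k\|^2/2$ term is needed, but since $p_{\min}\in(0,1)$ and the quadratic term actually pushes $\textnormal{pred}_k$ up, I can also lower-bound $\textnormal{pred}_k$ by noting $\RegParam_k\|\mvec{d}_k\| \to \|\bar{\mvec g}\|$ as well, giving $\textnormal{pred}_k \approx \|\bar{\mvec g}\|\|\mvec d_k\|$ in the limit; I'd present whichever bound is cleanest.)

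Finally, for the ratio condition $\varrho_k > c_1$: by a Taylor expansion of $f$ around $\bar{\mvec{x}}$,
\begin{equation*}
    \textnormal{ared}_k = f(\bar{\mvec{x}}) - f(\bar{\mvec{x}} + \mvec{d}_k) = -\bar{\mvec{g}}^\Trp \mvec{d}_k + o(\|\mvec{d}_k\|),
\end{equation*}
using only $C^1$ smoothness of $f$. Comparing with $\textnormal{pred}_k = -\frac12\bar{\mvec{g}}^\Trp\mvec{d}_k + \frac{\RegParam_k}{2}\|\mvec d_k\|^2 = -\frac12\bar{\mvec g}^\Trp\mvec d_k + o(\|\mvec d_k\|)$ — here I use that $\RegParam_k\|\mvec d_k\|^2 = O(\|\mvec d_k\|\cdot\RegParam_k^{-1}\cdot\RegParam_k) $... more carefully, $\RegParam_k\|\mvec d_k\|^2 \le \|\bar{\mvec g}\|\|\mvec d_k\| + \|\mmat B_k\|\|\mvec d_k\|^2$, and since this is $\Theta(\|\mvec d_k\|)$ it is \emph{not} $o(\|\mvec d_k\|)$; so instead I divide directly: $\varrho_k = \textnormal{ared}_k/\textnormal{pred}_k$, and both numerator and denominator are $\sim -\bar{\mvec g}^\Trp\mvec d_k \cdot(\text{const})$ times $(1+o(1))$. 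Writing $a_k := -\bar{\mvec g}^\Trp\mvec d_k > 0$, we have $\textnormal{ared}_k = a_k(1 + o(1))$ and $\textnormal{pred}_k = \frac12 a_k + \frac{\RegParam_k}{2}\|\mvec d_k\|^2$. Since $0 \le \frac{\RegParam_k}{2}\|\mvec d_k\|^2 \le \frac12 a_k + O(\|\mvec d_k\|^2)$ (from the bound above, as $\|\mvec d_k\| \cdot \|\mmat B_k\| = o(1)$ times $\|\mvec d_k\|$), we get $\textnormal{pred}_k = c_k a_k$ with $c_k \in [\frac12, 1 + o(1)]$, hence $\varrho_k = \textnormal{ared}_k/\textnormal{pred}_k = (1+o(1))/c_k \ge (1+o(1))/(1+o(1)) \to$ a quantity $\ge$ a value approaching at least... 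I would bound $\varrho_k \ge \frac{a_k(1+o(1))}{a_k(1+o(1))} \to 1 > c_1$, using $c_1 \in (0,1)$. Either way $\varrho_k \to$ something $> c_1$, so the step is successful — contradiction.

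The main obstacle is the bookkeeping around $\textnormal{pred}_k$: because $\mmat{B}_k$ may be indefinite there is no a-priori positivity, and the quadratic regularization term $\frac{\RegParam_k}{2}\|\mvec{d}_k\|^2$ is of the \emph{same order} as the linear term $-\frac12\bar{\mvec{g}}^\Trp\mvec{d}_k$ (both $\Theta(\|\mvec d_k\|)$), so one cannot simply absorb it into an $o(\|\mvec d_k\|)$ remainder. The clean way around this is to \emph{normalize}: set $\hat{\mvec{d}}_k := \mvec{d}_k/\|\mvec{d}_k\|$, observe $\hat{\mvec{d}}_k \to -\bar{\mvec{g}}/\|\bar{\mvec{g}}\|$ and $\RegParam_k\|\mvec d_k\| = \|(\mmat I + \RegParam_k^{-1}\mmat B_k)^{-1}(-\bar{\mvec g})\| \to \|\bar{\mvec g}\|$, reducing everything to elementary limits of scalar quantities $\textnormal{pred}_k/\|\mvec d_k\|$ and $\textnormal{ared}_k/\|\mvec d_k\|$, both of which have finite positive limits. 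Once framed this way the contradiction with unsuccessfulness is immediate. A minor secondary point is handling the two different ways a step can be unsuccessful (equation unsolvable vs. $\textnormal{pred}$/$\varrho$ test failing) uniformly — but Lemma~\ref{Lem:AsymptoticGradient} disposes of the solvability issue directly.
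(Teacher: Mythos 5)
Your proposal is correct and follows essentially the same route as the paper's proof: contradiction, Lemma~\ref{Lem:AsymptoticGradient} for invertibility and the direction limit, the key observation $\RegParam_k\|\mvec{d}_k\|\to\|\bar{\mvec{g}}\|$, and normalization by $\|\mvec{d}_k\|$ to get $\textnormal{pred}_k/\|\mvec{d}_k\|\to\|\bar{\mvec{g}}\|>p_{\min}\|\bar{\mvec{g}}\|$ and $\varrho_k\to 1>c_1$. Your first bound for the $\textnormal{pred}_k$ test (dropping the quadratic term) only covers $p_{\min}<1/2$, but the fix you supply yourself — keeping $\frac{\RegParam_k}{2}\|\mvec{d}_k\|^2\sim\frac12\|\bar{\mvec{g}}\|\|\mvec{d}_k\|$ — is exactly what the paper does, so the argument is complete.
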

\begin{proof}
    Assume for the sake of contradiction that there exists $k_0\in\N$ such that all steps $k\ge k_0$ are unsuccessful. In particular, this implies $\RegParam_k\to \infty$ as $k\to\infty$ and $\mvec{x}_k=\mvec{x}_{k_0}$ for all $k\ge k_0$. Since $\{\mmat{B}_k\}$ is a bounded sequence, it follows from Lemma~\ref{Lem:AsymptoticGradient} that $\mmat{B}_k+\RegParam_k \mmat{I}$ is invertible for sufficiently large $k$, that $\mvec{d}_k\to 0$, and $\mvec{d}_k/\|\mvec{d}_k\| \to - \mvec{g}_{k_0}/\|\mvec{g}_{k_0}\|$. Moreover, the regularized Newton equation \eqref{Eq:RegularizedNewtonEq} implies that $\RegParam_k \|\mvec{d}_k\|\to \|\mvec{g}_{k_0}\|$. It is easy to deduce from these limit relations that
\begin{equation*}
    \textnormal{pred}_k=\frac{\RegParam_k}{2}\|\mvec{d}_k\|^2-\frac{1}{2}\mvec{g}_k^{\Trp}\mvec{d}_k > p_{\min} \|\mvec{g}_k\| \|\mvec{d}_k\|
    \quad\text{for sufficiently large }k
\end{equation*}
    (simply divide this inequality by $ \| \mvec{d}_k \| $ and recall that $ p_{\min} \in (0,1) $).
    Hence, the algorithm must eventually perform only Step~3u, which means that $\textnormal{ared}_k\le c_1 \textnormal{pred}_k$ for all $k\ge k_0$ sufficiently large. It then follows that
\begin{equation}\label{Eq:LemWellDefinedness1}
    f(\mvec{x}_{k_0}+\mvec{d}_k)-f(\mvec{x}_{k_0}) = -\textnormal{ared}_k \ge - c_1 \textnormal{pred}_k = \frac{c_1}{2}\mvec{g}_{k_0}^{\Trp}\mvec{d}_k-\frac{c_1 \RegParam_k}{2}\|\mvec{d}_k\|^2
    \quad\text{for }k\ge k_0.
\end{equation}
    We now divide both sides of this inequality by $t_k:=\|\mvec{d}_k\|$. Recalling that $\mvec{d}_k/\|\mvec{d}_k\| \to -\mvec{g}_{k_0}/\|\mvec{g}_{k_0}\|$, it follows that the left-hand side becomes
\begin{equation}\label{Eq:LemWellDefinedness2}
    \frac{f\mleft( \mvec{x}_{k_0}+t_k \frac{\mvec{d}_k}{\|\mvec{d}_k\|} \mright)-f(\mvec{x}_{k_0})}{t_k} \to \nabla f(\mvec{x}_{k_0})^{\Trp}\frac{-\mvec{g}_{k_0}}{\|\mvec{g}_{k_0}\|}=-\|\mvec{g}_{k_0}\|.
\end{equation}
    Conversely, recalling that $\RegParam_k \|\mvec{d}_k\|\to \|\mvec{g}_{k_0}\|$, the right-hand side of \eqref{Eq:LemWellDefinedness1} divided by $t_k$ satisfies
\begin{equation}\label{Eq:LemWellDefinedness3}
    \frac{c_1}{2}\mvec{g}_{k_0}^{\Trp}\frac{\mvec{d}_k}{\|\mvec{d}_k\|}-\frac{c_1 \RegParam_k}{2}\|\mvec{d}_k\| \to \frac{c_1}{2} \mvec{g}_{k_0}^{\Trp}\frac{-\mvec{g}_{k_0}}{\|\mvec{g}_{k_0}\|}-\frac{c_1}{2} \|\mvec{g}_{k_0}\|=-c_1 \|\mvec{g}_{k_0}\|.
\end{equation}
    Since $c_1\in (0,1)$, it then follows from \eqref{Eq:LemWellDefinedness2}, \eqref{Eq:LemWellDefinedness3} that $\|\mvec{g}_{k_0}\|=0$, a contradiction.
\end{proof}

The following result builds upon the well-definedness of the algorithm and shows that it achieves asymptotic stationarity.

\begin{theorem}[Global convergence I]\label{thm:convI}
    Let Assumption~\ref{Asm:Conv} hold, let $f$ be bounded from below, and $\{\mvec{x}_k\}$ generated by Algorithm~\ref{Alg:RegularizedQN}.
    Then $\liminf_{k\to\infty}\|\mvec{g}_k\|= 0$; in particular, given any $\varepsilon>0$, the algorithm terminates with $\|\mvec{g}_k\|<\varepsilon$ after finitely many iterations.
\end{theorem}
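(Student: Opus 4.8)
The plan is a proof by contradiction modelled on the classical trust-region convergence argument, taking advantage of the fact that here the regularization parameter is updated \emph{directly}. Suppose the assertion is false. Then $\liminf_{k\to\infty}\|\mvec{g}_k\|>0$, so there exist $\varepsilon>0$ and $k_0\in\N$ with $\|\mvec{g}_k\|\ge\varepsilon$ for all $k\ge k_0$; after discarding the first $k_0$ indices we may assume $k_0=0$. In particular $\mvec{g}_k\ne 0$ for all $k$, so Lemma~\ref{Lem:WellDefinedness} applies and the index set $\mathcal{S}$ of successful and highly successful steps is infinite. (If the algorithm happens to terminate, it does so with $\|\mvec{g}_k\|<\varepsilon$ by the stopping test, which is already the claimed conclusion; so we may assume an infinite run.)

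The first substantive step is to extract summability of the steps. For every $k\in\mathcal{S}$ the step passed the test in Step~2, hence $\textnormal{pred}_k>p_{\min}\|\mvec{g}_k\|\,\|\mvec{d}_k\|\ge p_{\min}\varepsilon\,\|\mvec{d}_k\|$, and $\varrho_k>c_1$ yields $f(\mvec{x}_k)-f(\mvec{x}_{k+1})=\textnormal{ared}_k>c_1 p_{\min}\varepsilon\,\|\mvec{d}_k\|$; on all other indices $f$ is unchanged. Since $\{f(\mvec{x}_k)\}$ is therefore nonincreasing and $f$ is bounded below, telescoping gives $\sum_{k\in\mathcal{S}}\|\mvec{d}_k\|<\infty$. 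Consequently $\{\mvec{x}_k\}$ is Cauchy and converges to some $\mvec{x}^*$; moreover $\|\mvec{d}_k\|\to 0$ along $\mathcal{S}$, and $\sup_k\|\mvec{x}_k-\mvec{x}^*\|<\infty$. Because $f\in C^2$, I would fix a compact ball $K$ containing $\{\mvec{x}_k\}$ together with all points $\mvec{x}_k+\mvec{d}_k$ arising from indices with $\RegParam_k\ge\|\mmat{B}_k\|+1$ (for such indices $\|\mvec{d}_k\|\le\|\mvec{g}_k\|$ is bounded), and set $L:=\max_{K}\|\nabla^2 f\|$, $G:=\max_K\|\nabla f\|$.

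The contradiction then comes from playing two monotonicity properties of $\RegParam_k$ against each other. On one hand, for $k\in\mathcal{S}$ the regularized equation \eqref{Eq:RegularizedNewtonEq} gives $\varepsilon\le\|\mvec{g}_k\|\le(\|\mmat{B}_k\|+\RegParam_k)\|\mvec{d}_k\|$, so by Assumption~\ref{Asm:Conv} and $\|\mvec{d}_k\|\to 0$ along $\mathcal{S}$ we must have $\RegParam_k\to\infty$ along $\mathcal{S}$. On the other hand, $\{\RegParam_k\}$ is in fact bounded: arguing exactly as in the proof of Lemma~\ref{Lem:WellDefinedness}, once $\RegParam_k$ exceeds a threshold depending only on $\sup_k\|\mmat{B}_k\|$ and $p_{\min}$, the step is attempted; for such $k$ one has $\|\mvec{d}_k\|\le G/(\RegParam_k-\|\mmat{B}_k\|)$, and a second-order Taylor expansion of $f$ on $K$ together with \eqref{Eq:PRed} gives $|\textnormal{ared}_k-\textnormal{pred}_k|\le\tfrac12(L+\|\mmat{B}_k\|)\|\mvec{d}_k\|^2$, whence $|\varrho_k-1|\le(L+\|\mmat{B}_k\|)\|\mvec{d}_k\|/(2p_{\min}\varepsilon)\to 0$ as $\RegParam_k\to\infty$. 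Thus there is a finite, trajectory-independent threshold $\Theta$ such that $\RegParam_k\ge\Theta$ forces a highly successful step and $\RegParam_{k+1}=\sigma_1\RegParam_k<\RegParam_k$, while $\RegParam_k<\Theta$ always yields $\RegParam_{k+1}\le\sigma_2\RegParam_k<\sigma_2\Theta$; an induction then gives $\RegParam_k\le\max\{\RegParam_0,\sigma_2\Theta\}$ for all $k$. This contradicts $\RegParam_k\to\infty$ along $\mathcal{S}$, so $\liminf_{k\to\infty}\|\mvec{g}_k\|=0$, which in turn implies $\|\mvec{g}_k\|<\varepsilon$ at some iterate and hence finite termination.

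The main obstacle I anticipate is the bookkeeping around the compact set $K$ and the Hessian bound $L$: these are needed to drive $\varrho_k\to 1$, but they are only available \emph{after} one knows that $\{\mvec{x}_k\}$ converges, so the summability estimate must be carried out first; and one must restrict attention to the regime $\RegParam_k\ge\|\mmat{B}_k\|+1$ so that $\|\mvec{d}_k\|$ stays controlled and $\mvec{x}_k+\mvec{d}_k$ cannot escape $K$. Beyond that, every individual estimate is a routine adaptation of those already used for Lemmas~\ref{Lem:AsymptoticGradient} and~\ref{Lem:WellDefinedness}.
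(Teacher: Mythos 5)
Your proof is correct, and its first half coincides with the paper's: the telescoping estimate $f(\mvec{x}_k)-f(\mvec{x}_{k+1})\ge c_1 p_{\min}\varepsilon\|\mvec{d}_k\|$ on successful steps, the resulting summability $\sum_{k\in\mathcal{S}}\|\mvec{d}_k\|<\infty$, the Cauchy property of $\{\mvec{x}_k\}$, and the deduction $\RegParam_k\to_{\mathcal{S}}\infty$ from $(\mmat{B}_k+\RegParam_k\mmat{I})\mvec{d}_k=-\mvec{g}_k$ are all exactly the paper's steps. The endgame is executed differently. The paper upgrades $\RegParam_k\to_{\mathcal{S}}\infty$ to $\RegParam_k\to\infty$ along the whole sequence, extracts a subsequence $\mathcal{S}'\subseteq\mathcal{S}$ of successful steps immediately preceded by unsuccessful ones, and obtains the contradiction by rerunning the limiting argument of Lemma~\ref{Lem:WellDefinedness} at the (moving but convergent) base points $\mvec{x}_{k-1}\to\bar{\mvec{x}}$. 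You instead show that $\{\RegParam_k\}$ is bounded: a uniform second-order Taylor bound on a compact convex set $K$ containing the trajectory yields $|\varrho_k-1|\le(L+\|\mmat{B}_k\|)\|\mvec{d}_k\|/(2p_{\min}\varepsilon)\to 0$ as $\RegParam_k\to\infty$, so that $\RegParam_k\ge\Theta$ forces a highly successful step and $\RegParam_{k+1}<\RegParam_k$, whence $\RegParam_k\le\max\{\RegParam_0,\sigma_2\Theta\}$ by induction --- contradicting $\RegParam_k\to_{\mathcal{S}}\infty$. Both versions rest on the same mechanism (sufficiently large regularization forces acceptance), but yours is quantitative and self-contained, avoiding the somewhat delicate re-use of Lemma~\ref{Lem:WellDefinedness} with a varying base point; the price is the bookkeeping around $K$, which you order correctly (convergence of $\{\mvec{x}_k\}$ first, then $K$, $L$, $G$, $\Theta$). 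Two minor points: $\Theta$ is not ``trajectory-independent'' (it depends on $K$ and hence on the limit $\bar{\mvec{x}}$), though only its independence of $k$ matters; and you should state explicitly that $\Theta\ge\sup_k\|\mmat{B}_k\|+1$ so that the indices with $\RegParam_k\ge\Theta$ are among those for which $\mvec{x}_k+\mvec{d}_k\in K$ and the Taylor estimate applies.
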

\begin{proof}
    Let $\mathcal{S}\subseteq\N$ be the set of indices of successful or highly successful steps. Note that $|\mathcal{S}|=\infty$ by Lemma~\ref{Lem:WellDefinedness}. Assume for the sake of contradiction that
\begin{equation}\label{Eq:ThmGlobalConvContradictionAsm}
    \liminf_{k\to\infty}\|\mvec{g}_k\|>0.
\end{equation}
    Since every step $k\in\mathcal{S}$ is successful, we have by definition that
\begin{equation*}
    f(\mvec{x}_k)-f(\mvec{x}_{k+1})\ge c_1 \textnormal{pred}_k \ge p_{\min} c_1 \|\mvec{g}_k\| \|\mvec{d}_k\|
    \quad\text{for every }k\in\mathcal{S}.
\end{equation*}
    By \eqref{Eq:ThmGlobalConvContradictionAsm}, there exist $k_0\in\N$ and $\varepsilon>0$ such that $\|\mvec{g}_k\|\ge \varepsilon$ for all $k\ge k_0$. Using the fact that $f$ is bounded from below, we obtain
\begin{equation}\label{Eq:ThmGlobalConvCauchy}
    \infty > \sum_{k\in\N} \big[ f(\mvec{x}_k)-f(\mvec{x}_{k+1}) \big] =
    \sum_{k\in\mathcal{S}} \big[ f(\mvec{x}_k)-f(\mvec{x}_{k+1}) \big]
    \ge p_{\min} c_1 \varepsilon \sum_{k\in\mathcal{S},\,k\ge k_0} \|\mvec{d}_k\|
\end{equation}
    and, in particular, $\mvec{d}_k\to_{\mathcal{S}}0$. Since every step $k\in\mathcal{S}$ is successful, we have $(\mmat{B}_k+\RegParam_k \mmat{I})\mvec{d}_k = -\mvec{g}_k$ for all $k\in\mathcal{S}$. This implies that $\{\RegParam_k\}_{k\in\mathcal{S}}$ cannot have a bounded subsequence (since this together with $\mvec{d}_k\to_{\mathcal{S}}0$ would violate \eqref{Eq:ThmGlobalConvContradictionAsm}). Hence, $\RegParam_k\to_{\mathcal{S}}+\infty$. In particular, the algorithm also performs infinitely many unsuccessful steps (i.e., $|\N\setminus\mathcal{S}|=\infty$), and $\RegParam_k\to +\infty$ since $\RegParam_k$ cannot decrease during unsuccessful iterations.

    Now, since $\mathcal{S}$ and $\N\setminus\mathcal{S}$ are infinite, we may choose an infinite set
    $\mathcal{S}'\subseteq\mathcal{S}$ such that $k-1\in\N\setminus\mathcal{S}$ whenever $k\in\mathcal{S}'$.
    Since $\mvec{x}_k$ is not updated in unsuccessful steps, it follows from \eqref{Eq:ThmGlobalConvCauchy} that
    \begin{equation*}
       \infty > p_{\min} c_1 \varepsilon \sum_{k\in\mathcal{S},\,k\ge k_0} \|\mvec{d}_k\| =
       p_{\min} c_1 \varepsilon \sum_{k\in\mathcal{S},\,k\ge k_0} \|\mvec{x}_{k+1} - \mvec{x}_k \| =
       p_{\min} c_1 \varepsilon \sum_{k\ge k_0} \|\mvec{x}_{k+1} - \mvec{x}_k \| .
    \end{equation*}
    Hence $\{\mvec{x}_k\}_{k\in\N}$ is a Cauchy sequence, and thus convergent.
    Let $\bar{\mvec{x}}$ denote its limit point. In particular, we then obtain
    $\mvec{x}_{k-1}\to_{\mathcal{S}'}\bar{\mvec{x}}$; thus, using $\RegParam_k\to +\infty$ and arguing as in
    the proof of Lemma~\ref{Lem:WellDefinedness},
    it follows that the steps $k-1$, $k\in\mathcal{S}'$, must be successful for sufficiently large $k\in\mathcal{S}'$. This is a contradiction.
\end{proof}

Note that the counterpart of Theorem~\ref{thm:convI} also holds for trust-region methods under the same
set of assumptions. Moreover, the technique of proof used here is related to the corresponding one known
for trust-region methods. Nevertheless, we stress that one has to be careful in translating the standard
trust-region proof to our regularization framework since well-known properties of the solution of the
trust-region subproblem may not hold in our case.

Similar to the theory of trust-region methods, we can use Theorem~\ref{thm:convI} to obtain a stronger
convergence result under an additional assumption.

\begin{theorem}[Global convergence II]
    Let Assumption~\ref{Asm:Conv} hold, let $f$ be bounded from below, and $\{\mvec{x}_k\}$ generated by Algorithm~\ref{Alg:RegularizedQN}.
    Suppose that $ \nabla f $ is uniformly continuous on a set $ X \subseteq \R^n $ satisfying $ \{\mvec{x}_k\} \subseteq X $.
    Then $\lim_{k\to\infty}\|\mvec{g}_k\|= 0$; in particular, every accumulation point of $\{\mvec{x}_k\}$ is a stationary point of $ f $.
\end{theorem}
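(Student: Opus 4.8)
The plan is to argue by contradiction using Theorem~\ref{thm:convI} as the starting point. Suppose $\lim_{k\to\infty}\|\mvec{g}_k\| = 0$ fails. Since Theorem~\ref{thm:convI} already gives $\liminf_{k\to\infty}\|\mvec{g}_k\| = 0$, the negation forces the existence of an $\varepsilon > 0$ and two interlacing infinite sequences of indices: indices $\ell_i$ where $\|\mvec{g}_{\ell_i}\| \ge 2\varepsilon$ (say), and later indices $m_i > \ell_i$ where $\|\mvec{g}_{m_i}\| < \varepsilon$, with $\|\mvec{g}_k\| \ge \varepsilon$ for all $k$ strictly between $\ell_i$ and $m_i$. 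This is the standard ``oscillation'' setup from trust-region convergence proofs. The goal is to show that the iterate must travel a uniformly positive distance between $\ell_i$ and $m_i$ (because the gradient changes by at least $\varepsilon$ and $\nabla f$ is uniformly continuous on $X$), and yet the total path length $\sum_k \|\mvec{x}_{k+1}-\mvec{x}_k\|$ is finite, which is a contradiction.

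First I would establish the finiteness of the total path length. On the index set $\mathcal{S}$ of (highly) successful steps, the sufficient-decrease inequality from Step~2 gives $f(\mvec{x}_k) - f(\mvec{x}_{k+1}) \ge p_{\min} c_1 \|\mvec{g}_k\|\,\|\mvec{d}_k\| \ge p_{\min} c_1 \varepsilon \|\mvec{d}_k\|$ for all large $k \in \mathcal{S}$ lying in one of the ``bad'' windows (where $\|\mvec{g}_k\| \ge \varepsilon$). Since $f$ is bounded below and $\{f(\mvec{x}_k)\}$ is nonincreasing, the telescoping sum $\sum_{k}[f(\mvec{x}_k)-f(\mvec{x}_{k+1})]$ converges, so $\sum_{k \in \mathcal{S}, \text{ in a bad window}} \|\mvec{d}_k\| < \infty$. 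Because $\mvec{x}_k$ is unchanged on unsuccessful steps, $\|\mvec{x}_{k+1}-\mvec{x}_k\| = \|\mvec{d}_k\|$ on successful steps and $0$ otherwise, so the partial path length accumulated between $\ell_i$ and $m_i$, namely $\sum_{k=\ell_i}^{m_i-1}\|\mvec{x}_{k+1}-\mvec{x}_k\|$, is the tail of a convergent series and hence tends to $0$ as $i \to \infty$. In particular $\|\mvec{x}_{m_i} - \mvec{x}_{\ell_i}\| \le \sum_{k=\ell_i}^{m_i-1}\|\mvec{x}_{k+1}-\mvec{x}_k\| \to 0$.

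Next I would derive the contradiction from uniform continuity. Since $\{\mvec{x}_k\} \subseteq X$ and $\nabla f$ is uniformly continuous on $X$, for the given $\varepsilon$ there is $\delta > 0$ with $\|\nabla f(\mvec{u}) - \nabla f(\mvec{v})\| < \varepsilon$ whenever $\mvec{u},\mvec{v} \in X$ and $\|\mvec{u}-\mvec{v}\| < \delta$. By the previous paragraph, $\|\mvec{x}_{m_i} - \mvec{x}_{\ell_i}\| < \delta$ for all large $i$, hence $\|\mvec{g}_{m_i} - \mvec{g}_{\ell_i}\| < \varepsilon$. But $\|\mvec{g}_{\ell_i}\| \ge 2\varepsilon$ and $\|\mvec{g}_{m_i}\| < \varepsilon$ give $\|\mvec{g}_{m_i} - \mvec{g}_{\ell_i}\| \ge \|\mvec{g}_{\ell_i}\| - \|\mvec{g}_{m_i}\| > \varepsilon$, a contradiction. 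Therefore $\lim_{k\to\infty}\|\mvec{g}_k\| = 0$. The final clause about accumulation points is then immediate: if $\mvec{x}_{k} \to_{\mathcal{K}} \bar{\mvec{x}}$ along some subsequence, continuity of $\nabla f$ gives $\nabla f(\bar{\mvec{x}}) = \lim_{k \to_{\mathcal{K}} \infty} \mvec{g}_k = 0$.

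The main obstacle, and the point requiring the most care, is the interface between the successful-step decrease estimate and the path-length bookkeeping: one must ensure that the sum controlling $\|\mvec{x}_{m_i}-\mvec{x}_{\ell_i}\|$ only involves steps $k$ whose gradient norm is genuinely bounded below by $\varepsilon$ — this is exactly why the window between $\ell_i$ and $m_i$ is chosen so that $\|\mvec{g}_k\| \ge \varepsilon$ throughout — and that unsuccessful steps, on which $\mvec{d}_k$ may not even be defined, contribute zero to the displacement. Once the windows are set up correctly, the rest is the routine telescoping-plus-uniform-continuity argument familiar from trust-region theory, and no properties of the (possibly indefinite) matrices $\mmat{B}_k + \RegParam_k\mmat{I}$ beyond Assumption~\ref{Asm:Conv} are needed.
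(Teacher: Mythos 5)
Your proposal is correct and follows essentially the same route as the paper's proof: the same oscillation-window setup between indices with $\|\mvec{g}\|\ge 2\varepsilon$ and $<\varepsilon$, the same use of the sufficient-decrease inequality $f(\mvec{x}_k)-f(\mvec{x}_{k+1})\ge p_{\min}c_1\varepsilon\|\mvec{x}_{k+1}-\mvec{x}_k\|$ (trivially valid on unsuccessful steps), and the same telescoping-plus-uniform-continuity contradiction. The only cosmetic difference is that you first establish summability of the total bad-window path length and then pass to tails, whereas the paper telescopes $f$ directly across each window to bound $\|\mvec{x}_{\ell(k)}-\mvec{x}_k\|$; both yield the same conclusion.
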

\begin{proof}
Assume there exists $ \delta > 0 $ and a subsequence $ \{ \mvec{x}_k \}_{k\in K} $ such that
\begin{equation*}
   \| \mvec{g}_k \| \geq 2 \delta \quad \text{for all } k \in K.
\end{equation*}
Since $ \liminf_{k\to\infty}\|\mvec{g}_k\|= 0 $ by Theorem~\ref{thm:convI}, we can find, for each $ k \in K $,
an index $ \ell (k) > k $ such that
\begin{equation*}
   \| \mvec{g}_l \| \geq \delta \quad \text{for all } k \leq l < \ell (k), \qquad \text{and} \qquad
   \| \mvec{g}_{\ell (k)} \| < \delta, \quad k \in K.
\end{equation*}
For an arbitrary $ k \in K $ and a successful or highly successful iteration $ l $ with $ k \leq l < \ell (k) $,
we obtain
\begin{equation*}
   f(\mvec{x}_l) - f(\mvec{x}_{l+1}) \geq c_1 \textnormal{pred}_k \geq p_{\min} c_1 \| \mvec{g}_l \| \| \mvec{d}_l \|
   \geq p_{\min} c_1 \delta \| \mvec{x}_{l+1} - \mvec{x}_l \|.
\end{equation*}
The same inequality holds for $ l $ being unsuccessful simply because $ \mvec{x}_{l+1} = \mvec{x}_l $ in this case. This implies
\begin{align*}
   p_{\min} c_1 \delta \| \mvec{x}_{\ell(k)} - \mvec{x}_k \|
   \le p_{\min} c_1 \delta \sum_{l= k}^{\ell(k)-1} \| \mvec{x}_{l+1} - \mvec{x}_l \|
   & \le \sum_{l= k}^{\ell(k)-1} \big( f(\mvec{x}_l) - f(\mvec{x}_{l+1}) \big) \\
   & = f(\mvec{x}_k) - f(\mvec{x}_{\ell(k)})
\end{align*}
for all $ k \in K $. Since $ f $ is bounded from below and $ \{ f(\mvec{x}_k) \} $ is monotonically
decreasing, we obtain $ f(\mvec{x}_k) - f(\mvec{x}_{\ell(k)}) \to 0 $ for $ k \to \infty $. This implies
$ \| \mvec{x}_{\ell(k)} - \mvec{x}_k \| \to_K 0 $. The uniform continuity of $ \nabla f $ on the set $ X $
therefore yields
\begin{equation*}
   \| \nabla f ( \mvec{x}_{\ell(k)} ) - \nabla f( \mvec{x}_k ) \| \to_K 0.
\end{equation*}
On the other hand, the choice of the index $ \ell(k) $ implies
\begin{equation*}
   \| \nabla f ( \mvec{x}_{\ell(k)} ) - \nabla f( \mvec{x}_k ) \| \geq \| \nabla f( \mvec{x}_k ) \| -
    \| \nabla f ( \mvec{x}_{\ell(k)} ) \| \geq 2 \delta - \delta = \delta .
\end{equation*}
This contradiction completes the proof.
\end{proof}

We close this section by noting that regularization techniques like in Algorithm~\ref{Alg:RegularizedQN}
are sometimes used in order to prove local fast convergence properties for Newton-type methods. This
corresponds to the choice $ \mmat{B}_k := \nabla^2 f(\mvec{x}_k) $ as the exact Hessian. Using a more refined
update of the regularization parameter, assuming a local error bound condition and the Hessian of $ f $
to be locally Lipschitz continuous, it is possible to verify local quadratic convergence for convex
objective functions, cf.~\cite{Li2004,Ueda2010,Ueda2014}. Since our focus is on large-scale problems,
our subsequent analysis concentrates on $ \mmat{B}_k $ being computed by limited memory quasi-Newton
matrices.


\section{Regularized Quasi-Newton Matrices}\label{Sec:Matrices}

This section provides the details of \emph{limited memory} type implementations of quasi-Newton methods. Some of the material below can be applied with minimal modifications to full memory quasi-Newton methods, but we forgo these investigations due to our focus on large-scale optimization.

In keeping with conventional limited memory notation, we assume an algorithmic framework where the last $m$ variable steps $\mvec{s}_i:=\mvec{x}_{i+1}-\mvec{x}_i$ are tracked together with the corresponding gradient differences $\mvec{y}_i:=\mvec{g}_{i+1}-\mvec{g}_i$, where we recall that $\mvec{g}_i=\nabla f(\mvec{x}_i)$. For convenience of notation, we aggregate these in the matrices
\begin{equation*}
    \mmat{S}_k := [\mvec{s}_{k-m} \,\cdots\, \mvec{s}_{k-1}] \in\R^{n\times m} \quad\text{and}\quad
    \mmat{Y}_k := [\mvec{y}_{k-m} \,\cdots\, \mvec{y}_{k-1}]\in\R^{n\times m}.
\end{equation*}
If fewer than $m$ previous iterates are available, that is, if $k<m$, we set
\begin{equation*}
    \mmat{S}_k := [\mvec{s}_0 \,\cdots\, \mvec{s}_{k-1}]\in \R^{n\times k} \quad\text{and}\quad
    \mmat{Y}_k := [\mvec{y}_0 \,\cdots\, \mvec{y}_{k-1}]\in \R^{n\times k}.
\end{equation*}
These definitions may seem like a mere matter of notation, but there are actually quite pragmatic arguments why $\mmat{S}$ and $\mmat{Y}$ should be treated as matrices instead of collections of vectors. Many limited memory operations can be formulated as loops over the recurring index $i=1,\ldots,m$, and the matrix notation sometimes allows us to formulate the underlying calculations as \emph{matrix--vector} operations (instead of a sequence of vector--vector operations). This approach should be used whenever possible in practical implementations because it leverages the power of low-level BLAS (basic linear algebra subprograms) and parallelism, providing a significant increase in computational efficiency.

\begin{remark}[Rejected quasi-Newton updates]\label{Rem:RejectedQNUpdates}
    For the sake of simplicity and to avoid notational overhead, we assume that the algorithm always ``accepts'' the data pair $(\mvec{s}_k,\mvec{y}_k)$ in each successful iteration. This is not the case for some quasi-Newton schemes, especially for nonconvex objective functions. In general, quasi-Newton updates are typically accepted or rejected using a so-called cautious updating scheme (see Section~\ref{Sec:Numerics}); when a pair $(\mvec{s}_k, \mvec{y}_k)$ is rejected, the matrices $\mmat{S}_k,\mmat{Y}_k$ of previous steps simply remain as they were.
\end{remark}

Most limited memory quasi-Newton methods implicitly generate a so-called \emph{compact representation} of the form
\begin{equation}\label{Eq:CompactRepresentation}
    \mmat{B}_k=\InitialB+\mmat{A}_k \mmat{Q}_{k}^{-1} \mmat{A}_k^{\Trp},
\end{equation}
where $\mmat{Q}_k\in\R^{s\times s}$ is a nonsingular symmetric matrix, $\mmat{A}_k\in\R^{n\times s}$, and $s\ll n$ is a constant depending on the particular quasi-Newton scheme. For instance, $s=2 m$ in limited memory BFGS methods, and $s=m$ for limited memory SR1.

The above representation provides a very convenient framework for the regularization approach: given a parameter $\RegParam\ge 0$ (e.g., one of the values $\RegParam_k$ from Algorithm~\ref{Alg:RegularizedQN}), the regularized Hessian approximation can be written as
\begin{equation*}
    \mmat{B}_k+\RegParam \mmat{I} =(\InitialB+\RegParam \mmat{I})
    +\mmat{A}_k \mmat{Q}_k^{-1} \mmat{A}_k^{\Trp}.
\end{equation*}
This facilitates the application of low-rank update formulas to compute the regularized Newton step both explicitly and cheaply.
To this end, let $\hat{\mmat{B}}_k:=\mmat{B}_k+\RegParam\mmat{I}$ and $\InitialBhat:=\InitialB+\RegParam\mmat{I}$.
Then the Sherman--Morrison--Woodbury formula implies that
\begin{equation}\label{Eq:RegularizedQNinverse}
    \hat{\mmat{B}}_k^{-1}=\InitialBhat^{-1}-
    \InitialBhat^{-1} \mmat{A}_k (\mmat{Q}_k
    +\mmat{A}_k^{\Trp}\InitialBhat^{-1} \mmat{A}_k)^{-1} \mmat{A}_k^{\Trp} \InitialBhat^{-1}\,,
\end{equation}
provided that $\InitialBhat$ is nonsingular.
Note that $\InitialBhat$ is usually a diagonal matrix whose inversion is trivial. Moreover, the inner matrix $\mmat{Q}_k+\mmat{A}_k^{\Trp} \InitialBhat^{-1}\mmat{A}_k$ is of size $s\times s$, so that its inversion can be carried out cheaply in relation to the dimension $n$. By the Woodbury matrix identity, the invertibility of this inner matrix is equivalent to that of $\hat{\mmat{B}}_k$.

In the following, we shall mainly assume that the initial matrix $\InitialB$ is chosen as a scalar multiple of the identity, $\InitialB:=\gamma_k \mmat{I}$. Writing $\hat{\gamma}_k:=\gamma_k+\RegParam$, it then follows that
\begin{equation}\label{Eq:RegularizedQNinverse_I}
    \hat{\mmat{B}}_k^{-1}=\hat{\gamma}_k^{-1}\mmat{I} - \hat{\gamma}_k^{-2}
    \mmat{A}_k (\mmat{Q}_k+ \hat{\gamma}_k^{-1} \mmat{A}_k^{\Trp} \mmat{A}_k)^{-1} \mmat{A}_k^{\Trp}.
\end{equation}
The practical efficiency of quasi-Newton methods significantly depends on the memorization and re-use of previously computed quantities. To this end, observe that the quasi-Newton recurrence implies
\begin{equation}\label{Eq:RegularizedQNstep}
    \mvec{s}_k = -\hat{\mmat{B}}_k^{-1} \mvec{g}_k= -\hat{\gamma}_k^{-1}\mvec{g}_k+
    \hat{\gamma}_k^{-2} \mmat{A}_k \mvec{p}_k,
\end{equation}
where
\begin{equation}\label{Eq:RegularizedQNstep_p}
    \mvec{p}_k :=(\mmat{Q}_k+\hat{\gamma}_k^{-1}\mmat{A}_k^{\Trp}\mmat{A}_k)^{-1}
    \mmat{A}_k^{\Trp} \mvec{g}_k.
\end{equation}
Thus, the main computational cost occurs in forming the product $\mmat{A}_k^{\Trp}\mvec{g}_k$, the solution of an $s\times s$ symmetric linear equation to obtain $\mvec{p}_k$, and the product $\mmat{A}_k \mvec{p}_k$. In addition, the matrices $\mmat{A}_k$ and $\mmat{Q}_k$ need to be updated in each iteration, and the matrix $\mmat{A}_k^{\Trp}\mmat{A}_k$ needs to be available. As we shall see later, it is possible to reduce the cost of these computations by using the inherent dependencies between the underlying formulas.

\begin{remark}[Regularized secant equation]\label{Rem:ModifiedSecant}
    Instead of compact representations, it is also possible to combine regularization and quasi-Newton techniques by directly approximating the sum $\nabla^2 f(\mvec{x}_k)+\RegParam \mmat{I}$; see \cite{Sugimoto2014}. This idea is based on the fact that the regularized Hessian satisfies (approximately) the modified secant equation
\begin{equation*}
    (\nabla^2 f(\mvec{x}_k)+\RegParam \mmat{I}) \mvec{s}_k
    \approx \mvec{y}_k +\RegParam \mvec{s}_k.
\end{equation*}
    Thus, an approximation $\hat{\mmat{B}}_k$ to $\nabla^2 f(\mvec{x}_k)+\RegParam \mmat{I}$ can be constructed by taking a modified initial guess $\InitialBhat:=\InitialB+\RegParam \mmat{I}$ and applying an arbitrary quasi-Newton scheme to the modified pair $(\mmat{S}_k,\hat{\mmat{Y}}_k):=(\mmat{S}_k,\mmat{Y}_k+\RegParam \mmat{S}_k)$. For certain quasi-Newton schemes like SR1 and PSB, this actually yields the same results as the approach based on compact representations (see Sections~\ref{Sec:Matrices:SR1} and \ref{Sec:Matrices:PSB}). In general, however, the two approaches are different.
\end{remark}

\subsection{Broyden--Fletcher--Goldfarb--Shanno (BFGS)}\label{Sec:Matrices:BFGS}

The BFGS update is often considered the most successful quasi-Newton scheme.
Throughout this section, let $\InitialB = \gamma_k \mmat{I}$ for some $\gamma_k\in\R$.
Following \cite{Byrd1994}, the compact representation of L-BFGS is given by
\begin{equation}\label{Eq:CompactRepresentationBFGS}
    \mmat{B}_k=\gamma_k \mmat{I}-
\begin{bmatrix}
    \mmat{S}_k & \mmat{Y}_k
\end{bmatrix}
\begin{bmatrix}
    \gamma_k^{-1} \mmat{S}_k^{\Trp} \mmat{S}_k & \gamma_k^{-1}\mmat{L}_k \\
    \gamma_k^{-1}\mmat{L}_k^{\Trp} & -\mmat{D}_k
\end{bmatrix}^{-1}
\begin{bmatrix}
    \mmat{S}_k^{\Trp} \\[1pt] \mmat{Y}_k^{\Trp}
\end{bmatrix},
\end{equation}
    where
\begin{equation}\label{Eq:AuxiliaryMatricesQN}
    \mmat{D}_k:= \mmat{D}(\mmat{S}_k^{\Trp}\mmat{Y}_k)
    \quad\text{and}\quad \mmat{L}_k:=\mmat{L}(\mmat{S}_k^{\Trp}\mmat{Y}_k)
\end{equation}
(recall that $ \mmat{D}(\cdot) $ denotes the diagonal part and $ \mmat{L} (\cdot) $ the strict lower triangle of a
given matrix). This can be written in the form \eqref{Eq:CompactRepresentation} by defining
\begin{equation}\label{Eq:CompactRepresentationDefBFGS}
    \mmat{A_k}:=
\begin{bmatrix}
    \mmat{S}_k & \mmat{Y}_k
\end{bmatrix}
    \quad\text{and}\quad
    \mmat{Q}_k:=
\begin{bmatrix}
    -\gamma_k^{-1} \mmat{S}_k^{\Trp} \mmat{S}_k & -\gamma_k^{-1} \mmat{L}_k \\
    -\gamma_k^{-1} \mmat{L}_k^{\Trp} & \mmat{D}_k
\end{bmatrix}
    .
\end{equation}
Note that $\mmat{Q}_k\in\R^{2 m\times 2 m}$.

The BFGS formula has a significant advantage in that the well-definedness of the updates can be controlled.
More specifically, assuming that $\mvec{s}_k^{\Trp}\mvec{y}_k>0$ for all $k$, it can be shown that the
BFGS matrix $\mmat{B}_k$ is positive definite, so that the regularized BFGS matrix
$\hat{\mmat{B}}_k=\mmat{B}_k+\RegParam\mmat{I}$ is also positive definite and therefore nonsingular.
By the Woodbury matrix identity, this implies that the inner matrix
$\mmat{Q}_k+\hat{\gamma}_k^{-1} \mmat{A}_k^{\Trp} \mmat{A}_k$ in \eqref{Eq:RegularizedQNinverse_I} is
invertible, and thus the regularized Newton step is well-defined for all $\RegParam\ge 0$.

In practice, the well-definedness is controlled by means of a so-called \emph{cautious updating} mechanism \cite{Li2001}. The previous limited memory data is only updated with the next pair $(\mvec{s}_k,\mvec{y}_k)$ if
\begin{equation}\label{Eq:CautiousUpdateLBFGS}
    \mvec{y}_k^{\Trp}\mvec{s}_k \ge \varepsilon \|\mvec{s}_k\|^2,
\end{equation}
where $\varepsilon>0$ is some predefined constant. This guarantees that the L-BFGS matrices $\mmat{B}_k$ are uniformly positive definite. If $\nabla f$ is Lipschitz continuous on the set of iterates (or an appropriate level set), then \eqref{Eq:CautiousUpdateLBFGS} also guarantees that $\{\mmat{B}_k\}$ is bounded.

\subsubsection*{Updating L-BFGS information}

We now describe how the L-BFGS information can be updated in an efficient manner. To avoid repetition, we only
describe the case where the previous information is already ``full'', i.e., where at least $m$ previous data pairs $(\mvec{s}_i,\mvec{y}_i)$ are available. The modifications necessary to treat the initial steps essentially amount to re-indexing and will not be detailed here.

Much of the computational effort of regularized L-BFGS can be mitigated by memorizing certain intermediate results.
Motivated by a related trust-region approach in \cite{Burdakov2017}, we track, in addition to the matrices $\mmat{S}_k$ and $\mmat{Y}_k$, the quantities
\begin{equation*}
    \mmat{A}_k^{\Trp}\mmat{A}_k \in \R^{2 m\times 2 m}
    \quad\text{and}\quad
    \mmat{A}_k^{\Trp} \mvec{g}_k \in \R^{2 m}.
\end{equation*}
Both of these quantities are necessary for the computation of the regularized quasi-Newton
step \eqref{Eq:RegularizedQNstep}, \eqref{Eq:RegularizedQNstep_p},
but they also occur in other places of the iteration and updating process,
so that memorizing them can save redundant computational effort. Recall that
$\mmat{A}_k=[\mmat{S}_k \, , \, \mmat{Y}_k]$, so that in particular
\begin{equation*}
    \mmat{A}_k^{\Trp}\mmat{A}_k=
\begin{bmatrix}
    \mmat{S}_k^{\Trp} \mmat{S}_k & \mmat{S}_k^{\Trp} \mmat{Y}_k \\[1pt]
    \mmat{Y}_k^{\Trp} \mmat{S}_k & \mmat{Y}_k^{\Trp} \mmat{Y}_k
\end{bmatrix}
    \quad\text{and}\quad \mmat{A}_k^{\Trp}\mvec{g}_k=
\begin{bmatrix}
    \mmat{S}_k^{\Trp} \mvec{g}_k \\[1pt]
    \mmat{Y}_k^{\Trp} \mvec{g}_k
\end{bmatrix}.
\end{equation*}
Hence, the matrix $\mmat{A}_k^{\Trp}\mmat{A}_k$ contains the blocks $\mmat{S}_k^{\Trp}\mmat{S}_k$, $\mmat{L}_k$,
and $\mmat{D}_k$ from \eqref{Eq:CompactRepresentationDefBFGS} as submatrices.

When passing from $k$ to $k+1$, these matrices and vectors can be updated as follows. If the data pair
$(\mvec{s}_k,\mvec{y}_k)$ is rejected, then $\mmat{A}_k$ remains unchanged, and we may update
$\mmat{A}_k^{\Trp}\mvec{g}_k$ by direct computation. If the data pair is accepted, then the updating process
requires more care since both $\mmat{A}_k^{\Trp}\mmat{A}_k$ and $\mmat{A}_k^{\Trp}\mvec{g}_k$ need to be
incremented. In this case, the new matrices $ \mmat{S}_{k+1} $ and $ \mmat{Y}_{k+1} $ consist of the last
$ m-1 $ columns of the old matrices $ \mmat{S}_{k} $ and $ \mmat{Y}_{k} $, respectively, to which the
new vectors $\mvec{s}_k$ and $\mvec{y}_k$ are appended in the last column. We then begin by computing the vectors
\begin{equation}
    \mvec{v}:= \mmat{A}_k^{\Trp} \mvec{s}_k=-\hat{\gamma}_k^{-1} \mmat{A}_k^{\Trp} \mvec{g}_k+\hat{\gamma}_k^{-2} (\mmat{A}_k^{\Trp}\mmat{A}_k) \mvec{p}_k, \qquad
    \mvec{w}:= \mmat{A}_{k+1}^{\Trp} \mvec{g}_{k+1},
\end{equation}
where $\mvec{p}_k$ is given by \eqref{Eq:RegularizedQNstep_p}; as well as the scalar quantities $(\alpha_1, \alpha_2, \alpha_3) := (\mvec{s}_k^{\Trp} \mvec{s}_k, \mvec{s}_k^{\Trp} \mvec{y}_k, \mvec{y}_k^{\Trp} \mvec{y}_k)$.
This information is then used to update $\mmat{A}_k^{\Trp} \mmat{A}_k$ blockwise using the formulas
\begin{subequations}
\begin{align}
    \mmat{S}_{k+1}^{\Trp} \mmat{S}_{k+1} & =
\begin{bmatrix}
    (\mmat{S}_k^{\Trp}\mmat{S}_k)_{2:m, 2:m} & \mvec{v}_{2:m} \\
    * & \alpha_1
\end{bmatrix},\\[3pt]
    \mmat{S}_{k+1}^{\Trp} \mmat{Y}_{k+1} & =
\begin{bmatrix}
    (\mmat{S}_k^{\Trp} \mmat{Y}_k)_{2:m,2:m} & \mvec{w}_{1:m-1}-(\mmat{A}_k^{\Trp}\mvec{g}_k)_{2:m} \\[3pt]
    \mvec{v}_{m+2:2 m}^{\Trp} & \alpha_2
\end{bmatrix}, \\[3pt]
    \mmat{Y}_{k+1}^{\Trp} \mmat{Y}_{k+1} & =
\begin{bmatrix}
    (\mmat{Y}_k^{\Trp} \mmat{Y}_k)_{2:m,2:m} & \mvec{w}_{m+1:2 m-1}-(\mmat{A}_k^{\Trp} \mvec{g}_k)_{m+2:2 m} \\[3pt]
    * & \alpha_3
\end{bmatrix},
\end{align}
\end{subequations}
where ``$*$'' is given by symmetry, and expressions of the form $(\mmat{S}_k^{\Trp}\mmat{S}_k)_{2:m,2:m}$ or $\mvec{v}_{2:m}$ denote submatrices and -vectors built from the subscripted index ranges. Finally, we have $\mmat{Y}_{k+1}^{\Trp}\mmat{S}_{k+1}=(\mmat{S}_{k+1}^{\Trp}\mmat{Y}_{k+1})^{\Trp}$, and the new vector $\mmat{A}_{k+1}^{\Trp} \mvec{g}_{k+1}$ is by definition equal to $\mvec{w}$.

\subsubsection*{Computational complexity}

Let us now comment on the complexity involved in the computation of the regularized quasi-Newton step. Assuming that the product $\mmat{A}_k^{\Trp}\mvec{g}_k$ has been formed, the main cost is the solution of a $2 m\times 2 m$ symmetric linear system to form $\mvec{p}_k$, and the multiplication of $\mvec{p}_k$ with the $n\times 2 m$ matrix $\mmat{A}_k$. Hence, the complexity of the regularized quasi-Newton equation is $2 m n+O(m^3)$ multiplications.

When a step is successful, the existing data needs to be updated according to the formulas developed above. The dominating cost of this is $2 m n$ multiplications for the computation of $\mvec{w}=\mmat{A}_{k+1}^{\Trp}\mvec{g}_{k+1}$. Hence, the overall computational effort is at most $2 m n$ multiplications for an unsuccessful step, and $4 m n$ for a successful step.

The computational cost of the $2m \times 2m$ linear equation \eqref{Eq:RegularizedQNstep_p} for the computation of $\mvec{p}_k$ is of order $O(m^3)$. Thus, if $m\ll n$, this cost is negligible in comparison to $m n$. The slight computational overhead induced by this linear equation can be mitigated further by using the Schur complement of $\mmat{Q}_k + \hat{\gamma}_k^{-1}\mmat{A}_k^{\Trp}\mmat{A}_k$ to reduce the $2m \times 2m$ inversion to two $m\times m$ Cholesky factorizations. See \cite{Burke2008} for more details.

\subsection{Symmetric rank-one (SR1)}\label{Sec:Matrices:SR1}

For SR1, the compact representation takes on the form
\begin{equation}\label{Eq:CompactRepresentationSR1}
    \mmat{B}_k=\InitialB+(\mmat{Y}_k-\InitialB \mmat{S}_k)
    (\mmat{D}_k+\mmat{L}_k+\mmat{L}_k^{\Trp}-\mmat{S}_k^{\Trp}\InitialB\mmat{S}_k)^{-1}
    (\mmat{Y}_k-\InitialB \mmat{S}_k)^{\Trp},
\end{equation}
    where $\mmat{D}_k$ and $\mmat{L}_k$ are again given by \eqref{Eq:AuxiliaryMatricesQN}. This can be written in the form \eqref{Eq:CompactRepresentation} by defining
\begin{equation}\label{Eq:CompactRepresentationDefSR1}
    \mmat{A}_k:=\mmat{Y}_k -\InitialB \mmat{S}_k
    \quad\text{and}\quad
    \mmat{Q}_k:=\mmat{D}_k+\mmat{L}_k+\mmat{L}_k^{\Trp}-\mmat{S}_k^{\Trp}\InitialB\mmat{S}_k.
\end{equation}
Note that $\mmat{Q}_k\in\R^{m\times m}$ in this case.

If $\InitialB=\gamma_k \mmat{I}$, then \eqref{Eq:CompactRepresentationSR1} can be simplified to
\begin{equation}\label{Eq:CompactRepresentationBFGS_I}
    \mmat{B}_k =  \gamma_k \mmat{I} + (\mmat{Y}_k - \gamma_k \mmat{S}_k)(\mmat{D}_k + \mmat{L}_k + \mmat{L}_k^{\Trp} - \gamma_k \mmat{S}_k^{\Trp}\mmat{S}_k)^{-1}(\mmat{Y}_k - \gamma_k \mmat{S}_k)^{\Trp}.
\end{equation}
The well-definedness of the SR1 update is hard to guarantee in practice because the underlying rank one formula involves a denominator of the form $(\mvec{y}_k-\mmat{B}_k\mvec{s}_k)^{\Trp}\mvec{s}_k$, which can vanish. Thus, when applying formula \eqref{Eq:RegularizedQNinverse} to the SR1 setting, it is important to clarify how this situation is handled. Note that it is not possible to predict which new data $(\mvec{s}_{k+1},\mvec{y}_{k+1})$ might lead to ill-conditioning because this crucially depends on the previous information $(\mmat{S}_k,\mmat{Y}_k)$. In fact, even the discarding of old data at some point during the iteration might have an influence and change the well-definedness of the SR1 update.

Fortunately, there is a simple and effective way of skipping ill-conditioned updates ``on the fly'', i.e., during the computation of the quasi-Newton step. This effectively amounts to skipping an intermediate step $(\mvec{s}_i,\mvec{y}_i)$ when necessary and proceeding the SR1 update with $(\mvec{s}_{i+1},\mvec{y}_{i+1})$ instead. It was observed in \cite{Byrd1994} that ill-definedness of one of these updates amounts to the singularity of a principal minor of $\mmat{Q}_k$, or equivalently, to a vanishing pivot element during a triangularization of $\mmat{Q}_k$. When this occurs, it is proposed in \cite{Byrd1994} to skip the update by essentially ignoring the current row and column of $\mmat{Q}_k$, and the current column of $\mmat{A}_k$ (which contains the corresponding vectors $\mvec{s}_i$ and $\mvec{y}_i$).

The above procedure can be adapted to the \emph{regularized} SR1 setting by observing that the SR1 update ``commutes'' with the regularization in a certain sense. More specifically, if $\mmat{B}_k=\operatorname{SR1}(\InitialB,\mmat{S},\mmat{Y})$ denotes the SR1 update, then
\begin{equation*}
    \operatorname{SR1}(\InitialB+\RegParam \mmat{I},\mmat{S},\mmat{Y}+\RegParam \mmat{S})=
    \operatorname{SR1}(\InitialB,\mmat{S},\mmat{Y})+\RegParam \mmat{I}
\end{equation*}
for all $\RegParam\ge 0$, provided that the left side exists. Moreover, an easy calculation shows that the matrix $\mmat{Q}_k+\mmat{A}_k^{\Trp} \InitialBhat^{-1} \mmat{A}_k$ from \eqref{Eq:RegularizedQNinverse}, which needs to be inverted for the computation of the regularized Newton step, coincides (up to scaling) with the analogue of $\mmat{Q}_k$ which would arise for the SR1 update corresponding to $\InitialBhat$ and $\mmat{Y}_k+\RegParam \mmat{S}_k$.

\subsubsection*{Updating L-SR1 information}

The quantities involved in the L-SR1 computations can be updated in a similar fashion to the L-BFGS case; see Section~\ref{Sec:Matrices:BFGS}. We again maintain the quantities
\begin{equation}\label{Eq:SR1quantities}
    \mmat{S}_k^{\Trp}\mmat{S}_k,\,\mmat{S}_k^{\Trp}\mmat{Y}_k,\,
    \mmat{Y}_k^{\Trp}\mmat{Y}_k\in\R^{m\times m}
    \quad\text{and}\quad
    \mmat{S}_k^{\Trp}\mvec{g}_k,\,\mmat{Y}_k^{\Trp}\mvec{g}_k\in\R^m.
\end{equation}
These can be formed and updated as before. Moreover, they can be used to directly form the matrices $\mmat{A}_k$ and $\mmat{Q}_k$, the product $\mmat{A}_k^{\Trp}\mvec{g}_k$, and the matrix $\mmat{A}_k^{\Trp}\mmat{A}_k$.

\subsubsection*{Computational complexity}

The computational cost of the regularized L-SR1 method is as follows. In each successful iteration, the quantities \eqref{Eq:SR1quantities} are updated, and the matrix $\mmat{A}_k=\mmat{Y}_k-\InitialB\mmat{S}_k$ is formed. Using the techniques from Section~\ref{Sec:Matrices:BFGS}, these operations require $3 m n$ multiplications.

Moreover, the quasi-Newton step needs to be calculated in each step, which entails the solution of an $m\times m$ symmetric linear system to obtain $\mvec{p}_k$, and the multiplication of $\mvec{p}_k$ with the $n\times m$ matrix $\mmat{A}_k$, requiring another $m n$ multiplications.

In total, the cost of a successful step is therefore $4 m n$ multiplications, and the cost of an unsuccessful step is $m n$ multiplications (down from $2 m n$ in the BFGS case).

\subsection{Powell-symmetric-Broyden (PSB)}\label{Sec:Matrices:PSB}

As a third example, we include the classical PSB formula from \cite{Powell1970}.
This approach is interesting because the PSB update is always well-defined and has certain well-known minimality properties. The PSB update is given by
\begin{equation}\label{Eq:PSB}
    \mmat{B}_{k+1} = \mmat{B}_k+\frac{(\mvec{y}_k-\mmat{B}_k \mvec{s}_k)\mvec{s}_k^{\Trp}+\mvec{s}_k(\mvec{y}_k-\mmat{B}_k \mvec{s}_k)^{\Trp}}{\mvec{s}_k^{\Trp}\mvec{s}_k}-
    \frac{(\mvec{y}_k-\mmat{B}_k \mvec{s}_k)^{\Trp}\mvec{s}_k}{(\mvec{s}_k^{\Trp}\mvec{s}_k)^2} \mvec{s}_k \mvec{s}_k^{\Trp}.
\end{equation}
The compact representation of PSB is given in the next theorem.

Note that there is a related representation in \cite{Burdakov2002} for a multipoint secant version of PSB. The two representations coincide when $m=1$.

\begin{theorem}[Compact representation of PSB]\label{thm:PSB}
    The PSB formula admits the compact representation
\begin{equation}\label{Eq:PSBcompact}
    \mmat{B}_k=\InitialB+
\begin{bmatrix}
    \mmat{S}_k & \mmat{W}_k
\end{bmatrix}
\begin{bmatrix}
    0 & \mmat{U}_k \\
    \mmat{U}_k^{\Trp} & \mmat{L}_k+\mmat{D}_k+\mmat{L}_k^{\Trp}
\end{bmatrix}^{-1}
\begin{bmatrix}
    \mmat{S}_k & \mmat{W}_k
\end{bmatrix}^{\Trp},
\end{equation}
    where $\mmat{W}_k:=\mmat{Y}_k-\InitialB\mmat{S}_k$, $\mmat{U}_k$ is the (non-strictly) upper triangular part of $\mmat{S}_k^{\Trp}\mmat{S}_k$, $\mmat{L}_k$ is the strictly lower triangular part of $\mmat{S}_k^{\Trp}\mmat{W}_k$, and $\mmat{D}_k$ is the diagonal part of $\mmat{S}_k^{\Trp}\mmat{W}_k$.
\end{theorem}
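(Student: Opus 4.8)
The plan is to verify the compact representation \eqref{Eq:PSBcompact} by induction on $k$, the number of data pairs that have been incorporated into the formula, starting from $\mmat{B}_0 = \InitialB$ (where $\mmat{S}_0, \mmat{W}_0$ are empty so the second term vanishes). For the induction step, I would assume the formula holds for $\mmat{B}_k$ and show that applying the rank-two PSB update \eqref{Eq:PSB} with the pair $(\mvec{s}_k, \mvec{y}_k)$ produces $\mmat{B}_{k+1}$ of the stated form with $\mmat{S}_{k+1} = [\mmat{S}_k \; \mvec{s}_k]$ and $\mmat{W}_{k+1} = [\mmat{W}_k \; \mvec{w}_k]$ where $\mvec{w}_k := \mvec{y}_k - \mmat{B}_k \mvec{s}_k$. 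The first thing to observe is that $\mmat{W}_{k+1}$ as defined in the theorem statement (namely $\mmat{Y}_{k+1} - \InitialB \mmat{S}_{k+1}$) has last column $\mvec{y}_k - \InitialB\mvec{s}_k$, not $\mvec{y}_k - \mmat{B}_k\mvec{s}_k$; reconciling these two descriptions of $\mmat{W}_{k+1}$ — showing that the span of the columns, together with the middle matrix, is unchanged under this substitution — is something I would need to handle, presumably by a change-of-basis argument on the low-dimensional factor, or by carrying $\mvec{w}_k = \mvec{y}_k - \mmat{B}_k\mvec{s}_k$ through the induction and only at the end identifying the resulting representation with the one in terms of $\mmat{Y}_{k+1} - \InitialB\mmat{S}_{k+1}$.

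Concretely, I would write $\mmat{B}_k = \InitialB + \mmat{P}_k \mmat{M}_k^{-1} \mmat{P}_k^{\Trp}$ with $\mmat{P}_k = [\mmat{S}_k \; \mmat{W}_k]$ and $\mmat{M}_k = \begin{bmatrix} 0 & \mmat{U}_k \\ \mmat{U}_k^{\Trp} & \mmat{L}_k + \mmat{D}_k + \mmat{L}_k^{\Trp} \end{bmatrix}$, and then expand the PSB update. The numerator terms in \eqref{Eq:PSB} are built from $\mvec{s}_k$, $\mvec{w}_k = \mvec{y}_k - \mmat{B}_k\mvec{s}_k$, and $\mvec{s}_k\mvec{s}_k^{\Trp}$, so $\mmat{B}_{k+1} - \InitialB$ is a sum of $\mmat{P}_k\mmat{M}_k^{-1}\mmat{P}_k^{\Trp}$ plus outer products among the vectors $\mvec{s}_k, \mvec{w}_k$. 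The task is then purely algebraic: show that this sum equals $\mmat{P}_{k+1}\mmat{M}_{k+1}^{-1}\mmat{P}_{k+1}^{\Trp}$ for the claimed $\mmat{M}_{k+1}$. The cleanest way is to \emph{not} invert anything explicitly but instead verify the defining relations the representation must satisfy: (i) $\mmat{B}_{k+1}$ is symmetric (clear from the form, since $\mmat{M}_{k+1}$ is symmetric); (ii) the secant-type conditions characterizing which matrix this is. Actually the most robust route is to characterize the representation by the identities $\mmat{P}_{k+1}^{\Trp}(\mmat{B}_{k+1} - \InitialB)\mmat{P}_{k+1} = \mmat{P}_{k+1}^{\Trp}\mmat{P}_{k+1}\mmat{M}_{k+1}^{-1}\mmat{P}_{k+1}^{\Trp}\mmat{P}_{k+1}$ together with the range condition $\operatorname{range}(\mmat{B}_{k+1} - \InitialB) \subseteq \operatorname{range}(\mmat{P}_{k+1})$; but it is probably simpler just to expand and match block by block.

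The block-matching is where the real content lies. I would compute the products $\mmat{S}_{k+1}^{\Trp}\mmat{S}_{k+1}$, $\mmat{S}_{k+1}^{\Trp}\mmat{W}_{k+1}$, and $\mmat{W}_{k+1}^{\Trp}\mmat{W}_{k+1}$ in terms of the old blocks bordered by new rows/columns involving inner products $\mvec{s}_k^{\Trp}\mvec{s}_k$, $\mvec{s}_k^{\Trp}\mvec{w}_k$, $\mmat{S}_k^{\Trp}\mvec{w}_k$, $\mmat{W}_k^{\Trp}\mvec{w}_k$, etc. The key hereditary facts to exploit are the secant relations from the compact representation of $\mmat{B}_k$: because $\mmat{B}_k\mvec{s}_i = \mvec{y}_i$ for $i < k$ (PSB satisfies the secant equation and, crucially for multipoint versions, the earlier secant equations are preserved — this needs checking, and is part of why PSB has its minimality/heredity properties), one gets $\mmat{W}_k^{\Trp}\mvec{s}_k = (\mmat{Y}_k - \InitialB\mmat{S}_k)^{\Trp}\mvec{s}_k$ relating to $\mmat{S}_k^{\Trp}\mvec{w}_k$ in the right way, and $\mmat{S}_k^{\Trp}\mvec{w}_k = \mmat{S}_k^{\Trp}\mvec{y}_k - \mmat{S}_k^{\Trp}\mmat{B}_k\mvec{s}_k = \mmat{S}_k^{\Trp}\mvec{y}_k - \mmat{Y}_k^{\Trp}\mvec{s}_k$ (using $\mmat{B}_k\mmat{S}_k = \mmat{Y}_k$), which is precisely the strictly-lower / strictly-upper split that produces the $\mmat{L}_k + \mmat{D}_k + \mmat{L}_k^{\Trp}$ and $\mmat{U}_k$ structure. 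The main obstacle, as I see it, is twofold: first, correctly bookkeeping the triangular-part decompositions so that the bordered matrix $\mmat{M}_{k+1}$ comes out with exactly the claimed upper-triangular $\mmat{U}_{k+1}$ (upper triangle of $\mmat{S}_{k+1}^{\Trp}\mmat{S}_{k+1}$) and the claimed $\mmat{L}_{k+1} + \mmat{D}_{k+1} + \mmat{L}_{k+1}^{\Trp}$ block — the "$0$" in the top-left block of $\mmat{M}$ must be maintained, which forces certain cancellations; and second, confirming that the multipoint PSB update (iterated application of \eqref{Eq:PSB}) does preserve all prior secant equations $\mmat{B}_{k+1}\mvec{s}_i = \mvec{y}_i$ for $i \le k$, since the compact representation implicitly encodes this and the induction breaks without it. If that heredity fails for plain iterated PSB, the theorem must be read as defining $\mmat{B}_k$ directly by the right-hand side of \eqref{Eq:PSBcompact}, in which case the proof reduces to checking \eqref{Eq:PSB} as a one-step relation between consecutive such matrices — still an expansion, but with the heredity built in by fiat. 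I would clarify which reading is intended early in the proof and proceed accordingly.
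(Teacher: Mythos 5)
Your plan hinges on the hereditary secant relations $\mmat{B}_k\mmat{S}_k=\mmat{Y}_k$ (you invoke them explicitly to compute $\mmat{S}_k^{\Trp}\mvec{w}_k$ and to reconcile $\mvec{y}_k-\mmat{B}_k\mvec{s}_k$ with $\mvec{y}_k-\InitialB\mvec{s}_k$), and this is where the argument breaks. Iterated PSB satisfies only the \emph{most recent} secant equation: from \eqref{Eq:PSB}, preservation of $\mmat{B}_{k+1}\mvec{s}_{i}=\mvec{y}_{i}$ for $i<k$ would require the correction terms, which are proportional to $\mvec{s}_k^{\Trp}\mvec{s}_{i}$ and $(\mvec{y}_k-\mmat{B}_k\mvec{s}_k)^{\Trp}\mvec{s}_{i}$, to vanish; for general steps they do not. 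So the heredity you flag as ``needs checking'' in fact fails, and the fallback you offer --- reading the theorem as a definition of $\mmat{B}_k$ by the right-hand side of \eqref{Eq:PSBcompact} --- is not the intended statement: the theorem concerns the matrix obtained by starting from $\InitialB$ and applying the recursion \eqref{Eq:PSB} $k$ times, and that is what the paper proves.

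The missing idea that makes an induction go through \emph{without} heredity is an algebraic rewriting of the update: with $\mmat{V}_k:=\mmat{I}-(\mvec{s}_k^{\Trp}\mvec{s}_k)^{-1}\mvec{s}_k\mvec{s}_k^{\Trp}$ one has
\begin{equation*}
\mmat{B}_{k+1}=\mmat{V}_k\mmat{B}_k\mmat{V}_k+\begin{bmatrix}\mvec{s}_k & \mvec{y}_k\end{bmatrix}\begin{bmatrix}0 & \mvec{s}_k^{\Trp}\mvec{s}_k\\ \mvec{s}_k^{\Trp}\mvec{s}_k & \mvec{s}_k^{\Trp}\mvec{y}_k\end{bmatrix}^{-1}\begin{bmatrix}\mvec{s}_k & \mvec{y}_k\end{bmatrix}^{\Trp},
\end{equation*}
in which the added rank-two term involves only $\mvec{s}_k$ and $\mvec{y}_k$: the troublesome vector $\mmat{B}_k\mvec{s}_k$ has been absorbed into the conjugation by $\mmat{V}_k$. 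The paper then splits $\mmat{B}_k=\mmat{M}_k+\mmat{N}_k$, where $\mmat{M}_k$ carries $\InitialB$ through the successive projections (collapsed in closed form via $\mmat{V}_0\cdots\mmat{V}_{k-1}=\mmat{I}-\mmat{S}_k\mmat{U}_k^{-1}\mmat{S}_k^{\Trp}$, cf.\ \cite{Byrd1994}) and $\mmat{N}_k$ accumulates the rank-two terms, for which a closed form in $\mmat{S}_k,\mmat{Y}_k,\mmat{U}_k$ and the triangular parts of $\mmat{S}_k^{\Trp}\mmat{Y}_k$ is established by induction. This device also resolves your $\mmat{W}_{k+1}$ column discrepancy automatically, since $\mmat{W}_k=\mmat{Y}_k-\InitialB\mmat{S}_k$ only appears at the very end, when $\mmat{M}_k+\mmat{N}_k$ is regrouped into the block form \eqref{Eq:PSBcompact}. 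Without this (or an equivalent) identity, the block-matching you describe cannot be completed.
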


\begin{proof}
To simplify some technical details, we restrict the proof to the case where $k = m$ (i.e., the algorithm has performed exactly $m$ steps, and the matrices $\mmat{S}_k$ and $\mmat{Y}_k$ are ``full''). Observe first that \eqref{Eq:PSB} can be rewritten as
\begin{equation*}
    \mmat{B}_{k+1}= \mleft( \mmat{I}-\frac{\mvec{s}_k \mvec{s}_k^{\Trp}}{\mvec{s}_k^{\Trp}\mvec{s}_k} \mright) \mmat{B}_k \mleft( \mmat{I}-\frac{\mvec{s}_k \mvec{s}_k^{\Trp}}{\mvec{s}_k^{\Trp}\mvec{s}_k} \mright)+
\begin{bmatrix}
    \mvec{s}_k & \mvec{y}_k
\end{bmatrix}
\begin{bmatrix}
    0 & \mvec{s}_k^{\Trp}\mvec{s}_k \\
    \mvec{s}_k^{\Trp}\mvec{s}_k & \mvec{s}_k^{\Trp}\mvec{y}_k
\end{bmatrix}^{-1}
\begin{bmatrix}
    \mvec{s}_k & \mvec{y}_k
\end{bmatrix}^{\Trp}.
\end{equation*}
    Therefore, we can write $\mmat{B}_k=\mmat{M}_k+\mmat{N}_k$, where $\mmat{M}_k,\mmat{N}_k$ are recursively defined through the formulas
\begin{align*}
    \mmat{M}_0=\InitialB, \qquad & \mmat{M}_{i+1}=\mmat{V}_i \mmat{M}_i \mmat{V}_i,\\
    \mmat{N}_0=0, \qquad & \mmat{N}_{i+1}=\mmat{V}_i \mmat{N}_i \mmat{V}_i +
\begin{bmatrix}
    \mvec{s}_i & \mvec{y}_i
\end{bmatrix}
\begin{bmatrix}
    0 & \mvec{s}_i^{\Trp}\mvec{s}_i \\
    \mvec{s}_i^{\Trp}\mvec{s}_i & \mvec{s}_i^{\Trp}\mvec{y}_i
\end{bmatrix}^{-1}
\begin{bmatrix}
    \mvec{s}_i & \mvec{y}_i
\end{bmatrix}^{\Trp},
\end{align*}
    where $\mmat{V}_i:=\mmat{I}-(\mvec{s}_i^{\Trp}\mvec{s}_i)^{-1}\mvec{s}_i\mvec{s}_i^{\Trp}$ for all $i$.
    Observe now that $\mmat{V}_0 \cdot \ldots \cdot \mmat{V}_{k-1}=\mmat{I}-\mmat{S}_k \mmat{U}_k^{-1}\mmat{S}_k^{\Trp}$
    by \cite[Lem.~2.1]{Byrd1994}, so that
\begin{equation*}
    \mmat{M}_k= \big(\mmat{I}-\mmat{S}_k \mmat{U}_k^{-\Trp}\mmat{S}_k^{\Trp} \big) \InitialB
    \big( \mmat{I}-\mmat{S}_k \mmat{U}_k^{-1}\mmat{S}_k^{\Trp} \big).
\end{equation*}
    We proceed by using (finite) induction to show that
\begin{equation}\label{eq:induction}
    \mmat{N}_i=\mmat{S}_i \mmat{U}_i^{-\Trp} \mmat{Y}_i^{\Trp}+\mmat{Y}_i \mmat{U}_i^{-1} \mmat{S}_i^{\Trp}-
    \mmat{S}_i \mmat{U}_i^{-\Trp} \big( \tilde{\mmat{L}}_i+\tilde{\mmat{D}}_i+\tilde{\mmat{L}}_i^{\Trp} \big)
    \mmat{U}_i^{-1} \mmat{S}_i^{\Trp} \quad \text{for all } i = 1, \ldots, k,
\end{equation}
    where $\tilde{\mmat{L}}_i:=\mmat{L}(\mmat{S}_i^{\Trp}\mmat{Y}_i)$ and
    $ \tilde{\mmat{D}}_i := \mmat{D} (\mmat{S}_i^{\Trp}\mmat{Y}_i)$. Before we verify this formula,
    we show that it yields the desired compact representation of the PSB formula. Indeed, using
    \eqref{eq:induction} and the definitions of the matrices $ \tilde{\mmat{L}}_k, \tilde{\mmat{D}}_k, \mmat{L}_k, \mmat{D}_k $, respectively, we obtain
\begin{eqnarray*}
    \mmat{B}_k & = & \mmat{M}_k + \mmat{N}_k \\
    & = & \InitialB - \mmat{S}_k \mmat{U}_k^{-\Trp} \mmat{S}_k^{\Trp}
    \InitialB - \InitialB \mmat{S}_k \mmat{U}_k^{-1} \mmat{S}_k^{\Trp} +
    \mmat{S}_k \mmat{U}_k^{-\Trp} \mmat{Y}_k^{\Trp} + \mmat{Y}_k \mmat{U}_k^{-1} \mmat{S}_k^{\Trp} \\
    & & - \mmat{S}_k \mmat{U}_k^{-\Trp} \big( \mmat{L}_k + \mmat{D}_k + \mmat{L}_k^{\Trp} \big)
    \mmat{U}_k^{-1} \mmat{S}_k^{\Trp}.
\end{eqnarray*}
On the other hand, exploiting the fact that
\begin{equation*}
   \begin{bmatrix}
    0 & \mmat{U}_k \\
    \mmat{U}_k^{\Trp} & \mmat{L}_k+\mmat{D}_k+\mmat{L}_k^{\Trp}
    \end{bmatrix}^{-1}
    =
    \begin{bmatrix}
    - \mmat{U}_k^{-\Trp} \big( \mmat{L}_k+\mmat{D}_k+\mmat{L}_k^{\Trp} \big) \mmat{U}_k^{-1} &
    \mmat{U}_k^{-\Trp} \\
    \mmat{U}_k^{-1} & 0
    \end{bmatrix},
\end{equation*}
using $ \mmat{W}_k=\mmat{Y}_k-\InitialB\mmat{S}_k $, and expanding \eqref{Eq:PSBcompact},
it is easy to see that we obtain the same expression.

Hence it remains to verify \eqref{eq:induction} by induction. For $ i = 1 $, we have
\begin{equation*}
   \mmat{S}_1 = \begin{bmatrix} s_0 \end{bmatrix}, \quad
   \mmat{Y}_1 = \begin{bmatrix} y_0 \end{bmatrix}, \quad
   \mmat{U}_1^{-1} = \frac{1}{\mvec{s}_0^{\Trp} \mvec{s}_0}, \quad
   \tilde{\mmat{L}}_1 = \begin{bmatrix} 0 \end{bmatrix}, \quad
   \tilde{\mmat{D}}_1 = \mvec{s}_0^{\Trp} \mvec{y_0}.
\end{equation*}
Together with the observation that
\begin{equation}\label{eq:sinverse}
   \begin{bmatrix}
   0 & \mvec{s}_i^{\Trp} \mvec{s}_i \\
   \mvec{s}_i^{\Trp} \mvec{s}_i & \mvec{s}_i^{\Trp} \mvec{y}_i
   \end{bmatrix}^{-1}
   =
   \begin{bmatrix}
   - \frac{\mvec{s}_i^{\Trp} \mvec{y}_i}{( \mvec{s}_i^{\Trp} \mvec{s}_i )^2} &
   \frac{1}{\mvec{s}_i^{\Trp} \mvec{s}_i} \\
   \frac{1}{\mvec{s}_i^{\Trp} \mvec{s}_i} & 0
   \end{bmatrix},
\end{equation}
an elementary calculation shows that \eqref{eq:induction} holds for $ i = 1 $.
Suppose the statement is true for some $ i = 1,\ldots,k-1$.
Using the induction hypothesis together with \eqref{eq:sinverse}, a straightforward calculation shows that
\begin{eqnarray*}
   \mmat{N}_{i+1} & = & \mmat{V}_i \mmat{S}_i \mmat{U}_i^{-\Trp} \mmat{Y}_i^{\Trp} \mmat{V}_i +
   \mmat{V}_i \mmat{Y}_i \mmat{U}_i^{-1} \mmat{S}_i^{\Trp} \mmat{V}_i -
   \mmat{V}_i \mmat{S}_i \mmat{U}_i^{- \Trp} \big(
   \tilde{\mmat{L}}_i+\tilde{\mmat{D}}_i+\tilde{\mmat{L}}_i^{\Trp} \big)
   \mmat{U}_i^{-1} \mmat{S}_i^{\Trp} \mmat{V}_i \\
   & & - \frac{\mvec{s}_i^{\Trp} \mvec{y}_i}{( \mvec{s}_i^{\Trp} \mvec{s}_i )^2} \mvec{s}_i \mvec{s}_i^{\Trp} +
   \frac{1}{\mvec{s}_i^{\Trp} \mvec{s}_i} \mvec{s}_i \mvec{y}_i^{\Trp} +
   \frac{1}{\mvec{s}_i^{\Trp} \mvec{s}_i} \mvec{y}_i \mvec{s}_i^{\Trp}.
\end{eqnarray*}
On the other hand, let us calculate the expression \eqref{eq:induction} for $ i + 1 $. Based on the
partitions
\begin{eqnarray*}
   & & \mmat{S}_{i+1} = \begin{bmatrix} \mmat{S}_i & \mvec{s}_i \end{bmatrix}, \\
   & & \mmat{Y}_{i+1} = \begin{bmatrix} \mmat{Y}_i & \mvec{y}_i \end{bmatrix}, \\
   & & \mmat{U}_{i+1} = \begin{bmatrix} \mmat{U}_i & \mmat{S}_i^{\Trp} \mvec{s}_i \\ 0 &
   \mvec{s}_i^{\Trp} \mvec{s}_i \end{bmatrix} \quad \Longrightarrow \quad
   \mmat{U}_{i+1}^{-1} = \begin{bmatrix} \mmat{U}_i^{-1} & - \frac{1}{\mvec{s}_i^{\Trp} \mvec{s}_i}
   \mmat{U}_i^{-1} \mmat{S}_i^{\Trp} \mvec{s}_i \\ 0 & \frac{1}{\mvec{s}_i^{\Trp} \mvec{s}_i} \end{bmatrix}, \\
   & & \tilde{\mmat{L}}_{i+1} = \begin{bmatrix} \tilde{\mmat{L}}_i & 0 \\ \mvec{s}_i^{\Trp} \mmat{Y}_i & 0
   \end{bmatrix}, \\
   & & \tilde{\mmat{D}}_{i+1} = \begin{bmatrix} \tilde{\mmat{D}}_i & 0 \\ 0 & \mvec{s}_i^{\Trp} \mvec{y}_i
   \end{bmatrix},
\end{eqnarray*}
we obtain
\begin{eqnarray*}
   \mmat{S}_{i+1} \mmat{U}_{i+1}^{-\Trp} & = & \begin{bmatrix} \mmat{V}_i \mmat{S}_i \mmat{U}_i^{-\Trp} &
   \frac{1}{\mvec{s}_i^{\Trp} \mvec{s}_i} \mvec{s}_i \end{bmatrix}, \\
   \mmat{S}_{i+1} \mmat{U}_{i+1}^{-\Trp} \mmat{Y}_{i+1}^{\Trp} & = & \mmat{V}_i \mmat{S}_i \mmat{U}_i^{-\Trp}
   \mmat{Y}_i^{\Trp} + \frac{1}{\mvec{s}_i^{\Trp} \mvec{s}_i} \mvec{s}_i \mvec{y}_i^{\Trp}, \\
   \tilde{\mmat{L}}_{i+1} + \tilde{\mmat{D}}_{i+1} + \tilde{\mmat{L}}_{i+1}^{\Trp} & = &
   \begin{bmatrix} \tilde{\mmat{L}}_{i} + \tilde{\mmat{D}}_{i} + \tilde{\mmat{L}}_{i}^{\Trp} &
   \mmat{Y}_i^{\Trp} \mvec{s}_i \\ \mvec{s}_i^{\Trp} \mmat{Y}_i & \mvec{s}_i^{\Trp} \mvec{y}_i
   \end{bmatrix},
\end{eqnarray*}
hence
\begin{eqnarray*}
   \lefteqn{\mmat{S}_{i+1} \mmat{U}_{i+1}^{-\Trp} \big( \tilde{\mmat{L}}_{i+1} + \tilde{\mmat{D}}_{i+1} +
   \tilde{\mmat{L}}_{i+1}^{\Trp} \big) \mmat{U}_{i+1}^{-1} \mmat{S}_{i+1}^{\Trp}} \\
   & = & \mmat{V}_i \mmat{S}_i \mmat{U}_i^{-\Trp} \big( \tilde{\mmat{L}}_{i} + \tilde{\mmat{D}}_{i} +
   \tilde{\mmat{L}}_{i}^{\Trp} \big) \mmat{U}_i^{-1} \mmat{S}_i^{\Trp} \mmat{V}_i +
   \frac{1}{\mvec{s}_i^{\Trp} \mvec{s}_i} \mmat{V}_i \mmat{S}_i \mmat{U}_i^{-\Trp} \mmat{Y}_i^{\Trp}
   \mvec{s}_i \mvec{s}_i^{\Trp} \\
   & & + \frac{1}{\mvec{s}_i^{\Trp} \mvec{s}_i} \mvec{s}_i \mvec{s}_i^{\Trp} \mmat{Y}_i
   \mmat{U}_i^{-1} \mmat{S}_i^{\Trp} \mmat{V}_i +
   \frac{\mvec{s}_i^{\Trp} \mvec{y}_i}{( \mvec{s}_i^{\Trp} \mvec{s}_i )^2} \mvec{s}_i \mvec{s}_i^{\Trp}.
\end{eqnarray*}
Using these expressions and expanding \eqref{eq:induction} with $ i $ replaced by $ i+1 $, and
taking into account once again the definition of
$ \mmat{V}_i $, an elementary calculation shows that the resulting matrix $ \mmat{N}_{i+1} $
coincides with the one obtained previously. This completes the induction.
\end{proof}

If $\InitialB =\gamma_k \mmat{I}$ for some $\gamma_k\in\R$, then \eqref{Eq:PSBcompact} can be rewritten as
\begin{equation}\label{Eq:PSBcompactI}
    \mmat{B}_k=\gamma_k \mmat{I}+\mmat{A}_k
\begin{bmatrix}
    0 & \mmat{U}_k \\
    \mmat{U}_k^{\Trp} & \mmat{D}(\mmat{S}_k^{\Trp}\mmat{Y}_k) + \gamma_k \mmat{D}(\mmat{S}_k^{\Trp}\mmat{S}_k)+ \mmat{L}(\mmat{S}_k^{\Trp}\mmat{Y}_k)+\mmat{L}(\mmat{S}_k^{\Trp}\mmat{Y}_k)^{\Trp}
\end{bmatrix}^{-1}
    \mmat{A}_k^{\Trp},
\end{equation}
where $\mmat{A}_k=[\mmat{S}_k, \mmat{Y}_k]$ as before. This form of $\mmat{B}_k$ has the advantage that all involved quantities can be obtained as submatrices of the product $\mmat{A}_k^{\Trp}\mmat{A}_k$.

\subsubsection*{Updating and Complexity}

As before, the L-PSB quantities can be updated in a similar fashion to the L-BFGS case; see Section~\ref{Sec:Matrices:BFGS}. We again maintain the quantities
\begin{equation}\label{Eq:PSBquantities}
    \mmat{S}_k^{\Trp}\mmat{S}_k,\,\mmat{S}_k^{\Trp}\mmat{Y}_k,\,
    \mmat{Y}_k^{\Trp}\mmat{Y}_k\in\R^{m\times m}
    \quad\text{and}\quad
    \mmat{S}_k^{\Trp}\mvec{g}_k,\,\mmat{Y}_k^{\Trp}\mvec{g}_k\in\R^m.
\end{equation}
These can be updated as before and used to compute the quasi-Newton direction via the inverse formula \eqref{Eq:RegularizedQNinverse}. The complexity of the L-PSB step equals that of L-BFGS.

\section{Numerical Experiments}\label{Sec:Numerics}

The benchmark implementation described here can be found online at \url{https://github.com/dmsteck/paper-regularized-qn-benchmark}.

In this section, we compare a selection of regularized quasi-Newton methods (Algorithm~\ref{Alg:RegularizedQN}) amongst each other and with existing L-BFGS type line search and trust region algorithms from the literature.

Algorithms were tested on all large-scale ($n\ge 1000$) problems from the \texttt{CUTEst} collection \cite{Gould2015}. The implementation was done in \texttt{Python3} using the \texttt{PyCUTEst} interface \cite{Fowkes2019}. All problems were computed with initial points as supplied by the library. We excluded test problems where all algorithms failed within the threshold of 100,000 iterations (see below). We also omitted \texttt{FLETCBV2} because the initial point is a stationary point. The final test set after these considerations consists of 77 problems.

The results for different algorithms are compared using performance profiles \cite{Dolan2002} based on the number of function evaluations. Note that the regularization methods evaluate the function exactly once per successful or unsuccessful step, so that the number of function evaluations equals the number of iterations. Furthermore, aside from function or gradient evaluations, all tested methods have a similar computational complexity per step (see \cite{Burdakov2017,Liu1989} and Section~\ref{Sec:Matrices}), so that function evaluations provide a simple yet meaningful baseline metric.

Note that we didn't account for gradient evaluations in our comparison; the regularization methods (and the trust-region comparison method in Section~\ref{Sec:Numerics:Existing}) evaluate $\nabla f$ exactly once in every successful iteration whereas Wolfe-based line search methods evaluate $\nabla f$ within the inner line search loop. Hence, accounting for gradient evaluations would benefit many of our methods in the subsequent comparisons. However, to keep things simple, we have avoided a more granular breakdown and focused exclusively on function evaluations.

Whenever an algorithm didn't solve a particular problem to within tolerance (see below), the number of function evaluations was set to $+\infty$ for the purpose of comparison.

\subsection{Comparison of Regularized Limited Memory Methods}\label{Sec:Numerics:Reg}

We implemented the following four regularization-based algorithms:
\begin{itemize}[leftmargin=8em]
\item[{\bfseries\sffamily regLBFGS:}] Algorithm~\ref{Alg:RegularizedQN} using the L-BFGS technique as set out in Section~\ref{Sec:Matrices:BFGS};
\item[{\bfseries\sffamily regLBFGSsec:}] Algorithm~\ref{Alg:RegularizedQN} using the regularized secant version of L-BFGS as discussed in Remark~\ref{Rem:ModifiedSecant} (see also \cite{Sugimoto2014});
\item[{\bfseries\sffamily regLSR1:}] Algorithm~\ref{Alg:RegularizedQN} using the L-SR1 technique as set out in Section~\ref{Sec:Matrices:SR1};
\item[{\bfseries\sffamily regLPSB:}] Algorithm~\ref{Alg:RegularizedQN} using the L-PSB technique as set out in Section~\ref{Sec:Matrices:PSB}.
\end{itemize}
The implementations all use the same hyperparameters
\begin{equation}\label{Eq:Hyperparameters}
m=5, \quad
\RegParam_0=1, \quad
p_{\min}=c_1=10^{-4}, \quad
c_2=0.9, \quad
\sigma_1 = 0.5, \quad
\sigma_2 = 4.
\end{equation}
To guarantee well-definedness, \textsf{regLBFGS} and \textsf{regLBFGSsec} are implemented using the cautious updating scheme \eqref{Eq:CautiousUpdateLBFGS} with $\varepsilon:=10^{-8}$. The \textsf{regLSR1} and \textsf{regLPSB} algorithms benefit from indefinite Hessian approximations \cite{Byrd1996,Conn1988} and therefore were not combined with the cautious updating scheme. However, for these methods, the cautious scheme was still applied to the update of the rolling initial approximation \eqref{Eq:InitialB}; see below.

Inspired by a technique from \cite{Burdakov2017}, all algorithms begin with a single Mor\'e--Thuente line search along the normalized negative gradient direction prior to the main iteration loop (see Section~\ref{Sec:Numerics:Existing} for more details). This has the advantage of providing an initial memory pair $(\mvec{s}_0, \mvec{y}_0)$ that passes the cautious update check \eqref{Eq:CautiousUpdateLBFGS}, and reducing the impact of any initial backtracking on the iteration numbers.

\begin{figure}\centering
    \includegraphics[width=0.45\textwidth]{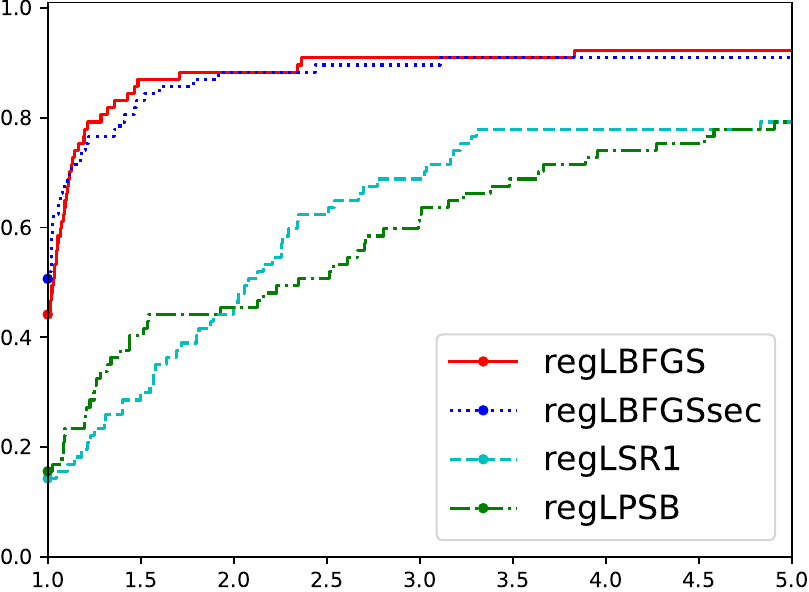}
    ~~
    \includegraphics[width=0.45\textwidth]{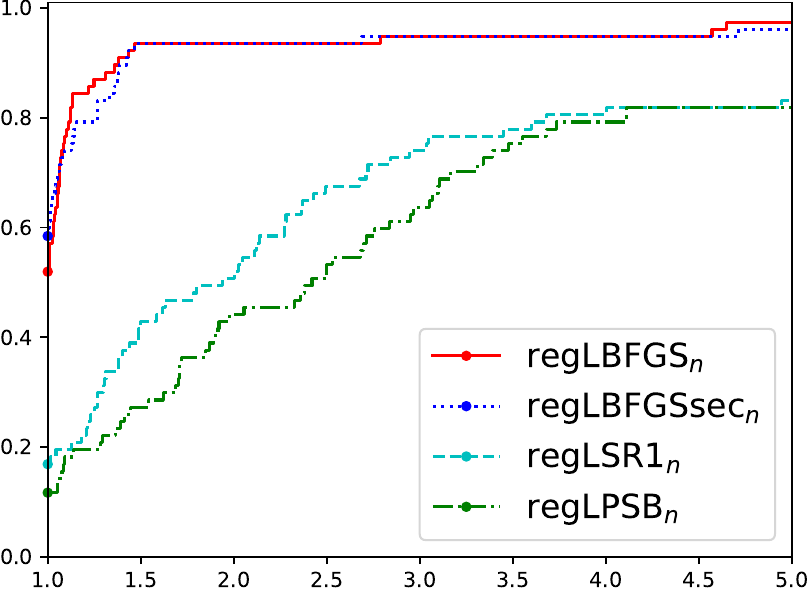}
    \caption{Performance profiles based on the number of function evaluations for the four algorithms from Section~\ref{Sec:Numerics:Reg}: monotone case (left), nonmonotone case (right).}
    \label{Fig:PerfProf11}
\end{figure}

The algorithms were terminated as soon as either
\begin{equation*}
    \|\mvec{g}_k\|_{\infty}<10^{-4}, \qquad
    k \ge 10^5, \qquad\text{or}\quad
    \RegParam_k > 10^{15}.
\end{equation*}
The initial estimate $\InitialB$ in step $k$ is defined by the standard formula
\begin{equation}\label{Eq:InitialB}
    \InitialB = \gamma_k \mmat{I}, \qquad
    \gamma_k = \frac{\mvec{y}_k^{\Trp}\mvec{y}_k}{\mvec{y}_k^{\Trp}\mvec{s}_k}.
\end{equation}
In addition, we adopted a lower threshold $\RegParam_{\min}:=10^{-4}$ for the regularization parameter. This improved the practical behavior of the method (particularly in the L-BFGS case) and also prevented the regularization parameter from becoming zero in limited-precision arithmetic.

It may seem that the above choices lead to a preference of high regularization parameters over low ones and could therefore impede fast asymptotic convergence. What we have found empirically is that Algorithm~\ref{Alg:RegularizedQN} (with L-BFGS) often behaves best when the regularization parameter is changed infrequently. This suggests that the parameter should be increased sharply when necessary (to avoid having to increase repeatedly), and only decreased when the step quality is very good. This is reflected in our choice of parameters.

Note also that limited memory methods rarely achieve actual superlinear convergence; the typical behavior is asymptotically linear \cite{Liu1989}, and classical results for inexact Newton methods (e.g., \cite[Thm.~7.1]{Nocedal2006}) indicate that a small but non-decaying value of $\RegParam_k$ will typically preserve linear convergence. This indicates that the choices made here are sound from a theoretical point of view.

Comparable studies in other papers \cite{Sugimoto2014,Burdakov2017} indicate that regularized methods may benefit from a nonmonotonicity strategy. Therefore, and to obtain a larger dataset, we also implemented nonmonotone versions of all algorithms, where $M:=8$ was chosen as the nonmonotonicity offset; this was incorporated into the methods by replacing the reference value $f(\mvec{x}_k)$ in the regularization control \eqref{Eq:ARed} and the line search routines by $\max_{0\le i < M} f(\mvec{x}_{k-i})$ for $k\ge M$. The initial steps $k=0,\ldots,M-1$ were treated without modification.

\begin{figure}\centering
    \includegraphics[width=0.45\textwidth]{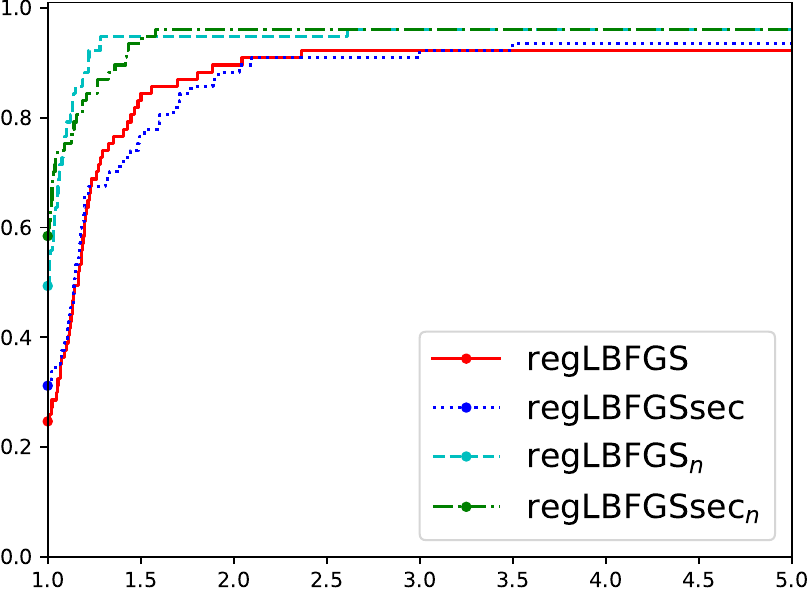}
    ~~
    \includegraphics[width=0.45\textwidth]{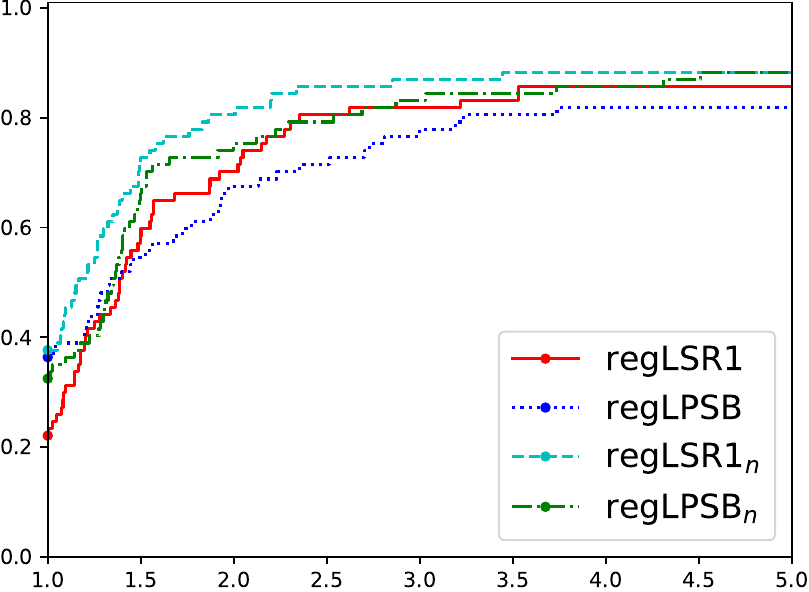}
    \caption{Performance profiles based on the number of function evaluations for the four algorithms from Section~\ref{Sec:Numerics:Reg}: monotone vs.\ nonmonotone (index $n$) algorithms.}
    \label{Fig:PerfProf12}
\end{figure}

Figure~\ref{Fig:PerfProf12} illustrates the relative behavior of the monotone and nonmonotone implementations. All algorithms seem to benefit from a nonmonotonicity strategy. It should be emphasized that our choice of such strategy is rather simple but we believe it is sufficient to illustrate the general picture.

Overall, somewhat unsurprisingly, L-BFGS turns out to be by far the most efficient quasi-Newton scheme even in the context of regularization. The regularized variants of L-SR1 and L-PSB are moderately competitive but fall short of the overall performance of \textsf{regLBFGS} and \textsf{regLBFGSsec}.

An interesting observation we made during our testing is that L-SR1 and, in particular, L-PSB were actually more efficient when used with a more ``optimistic'' regularization scheme (i.e., lower regularization parameters). This is somewhat surprising because these methods generate indefinite Hessian approximations which should, intuitively, benefit the most from regularization; on the other hand, L-BFGS generates an approximation which is positive definite anyway, which suggests that regularization may be less necessary here. The numerical evidence we observed contradicts this intuition.

We can only give a partial explanation for this phenomenon. It is well-known that BFGS and L-BFGS are related to the classical conjugate gradient method \cite{Nazareth1979}, which suggests that L-BFGS imposes some kind of relationship (a generalized ``conjugacy'') on successive search directions (see also the discussion after \cite[Eq.~65]{Burdakov2017}). We are unaware of a rigorous definition of such a property, but the relationship of successive search directions may be preserved in a certain way when L-BFGS is used with a regularization parameter that changes infrequently. On the other hand, L-SR1 and L-PSB are generally considered to generate more accurate approximations of the exact Hessian (especially when it is indefinite), which indicates that these methods behave more similarly to a conventional Newtonian algorithm and therefore benefit from a quicker reduction of regularization parameters.

The regularized secant version \textsf{regLBFGSsec} is interesting because it is rather simple to implement (by using the standard two-loop recursion) yet rivals the robustness of \textsf{regLBFGS}; see Figure~\ref{Fig:PerfProf11}.

\begin{table}\centering
\begin{tabular}{lllll}
    \toprule
    & \textsf{regLBFGS} & \textsf{regLBFGSsec} & \textsf{regLSR1} & \textsf{regLPSB} \\
    \midrule
    \textnormal{\% accepted steps (monotone)} & 84\% & 82\% & 85\% & 99\% \\
    \textnormal{\% accepted steps (nonmonotone)} & 99\% & 99\% & 87\% & 72\% \\
    \midrule
    \textnormal{\# problems solved (monotone)} & 72 & 72 & 68 & 67 \\
    \textnormal{\# problems solved (nonmonotone)} & 75 & 75 & 73 & 74 \\
    \bottomrule
\end{tabular}
\caption{Average proportion of accepted steps and total problems solved for all algorithms from Section~\ref{Sec:Numerics:Reg}.}
\label{Tab:SuccessfulSteps1}
\end{table}

Finally, Table~\ref{Tab:SuccessfulSteps1} shows the proportion of accepted steps and the total number of solved problems for all four regularization-based algorithms. The L-BFGS algorithms stand out for their high number of solved problems overall, and acceptance ratios of around 99\% in the nonmonotone case. Interestingly, \textsf{regLPSB} achieves around 99\% acceptance rate in the monotone case, tapering off to around 72\% for the nonmonotone implementation.

\subsection{Comparison to Existing Algorithms}\label{Sec:Numerics:Existing}

Let us now measure \textsf{regLBFGS} against relevant algorithms available in the literature. The ``reference'' algorithms we use are:
\begin{itemize}[leftmargin=8em]
    \item[{\bfseries\sffamily armijoLBFGS:}] the ordinary L-BFGS method with Armijo line search and the cautious updating scheme \eqref{Eq:CautiousUpdateLBFGS};
    \item[{\bfseries\sffamily wolfeLBFGS:}] the Liu--Nocedal L-BFGS method \cite{Liu1989} with Mor\'e--Thuente line search \cite{More1994};
    \item[{\bfseries\sffamily eigLBFGS:}] a slightly simplified version of the EIG$(\infty, 2)$ trust region L-BFGS algorithm from \cite{Burdakov2017}.
\end{itemize}
The Armijo search uses standard backtracking by repeatedly halving the step size $t_k$ until
\begin{equation*}
f(\mvec{x}_k + t_k \mvec{d}_k) \le f(\mvec{x}_k) + c_1 t_k \mvec{g}_k^{\Trp}\mvec{d}_k,
\end{equation*}
where (in this context) $\mvec{d}_k$ is the quasi-Newton step. The Mor\'e--Thuente line search uses the implementation of Diane O'Leary \cite{OLeary1991}, translated into \texttt{Python}. It terminates when
\begin{equation*}
    f(\mvec{x}_k + t_k \mvec{d}_k) \le f(\mvec{x}_k) + c_1 t_k \mvec{g}_k^{\Trp}\mvec{d}_k
    \quad\textnormal{and}\quad
    | \nabla f(\mvec{x}_k + t_k \mvec{d}_k)^{\Trp} \mvec{d_k} | \le -0.9 \mvec{g}_k^{\Trp}\mvec{d}_k.
\end{equation*}
The \textsf{eigLBFGS} algorithm is based on the EIG$(\infty, 2)$ implementation available at \url{https://gratton.perso.enseeiht.fr/LBFGS/index.html}.
In order to make the comparison fair, we have slightly simplified this algorithm by replacing the two-stage initial line search from EIG$(\infty, 2)$ with a single Mor\'e--Thuente search along the normalized negative gradient direction (which is consistent with the implementation of the regularization methods; see Section~\ref{Sec:Numerics:Reg}). Furthermore, the stopping criteria, cautious update mechanism, and trust region control parameters of EIG$(\infty, 2)$ were brought in line with the other implementations.

\begin{figure}\centering
    \includegraphics[width=0.45\textwidth]{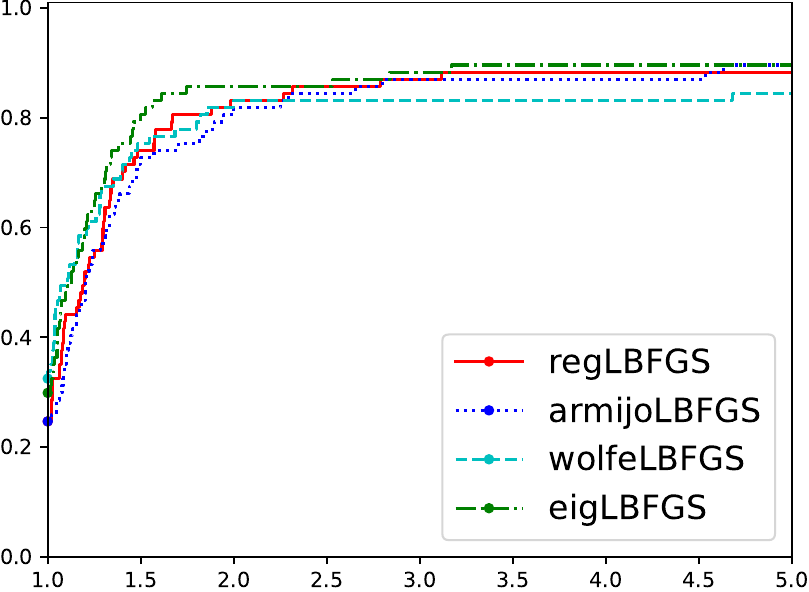}
    ~~
    \includegraphics[width=0.45\textwidth]{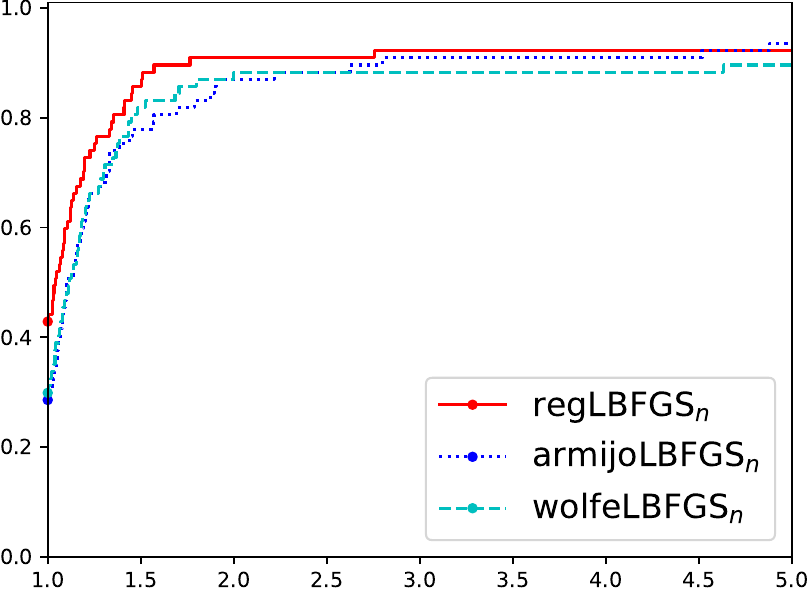}
    \caption{Performance profiles based on the number of function evaluations for \textsf{regLBFGS} and the three algorithms from Section~\ref{Sec:Numerics:Existing}: monotone case (left), nonmonotone case (right).}
    \label{Fig:PerfProf21}
\end{figure}

Note that a nonmonotone implementation of the EIG$(\infty, 2)$ algorithm from \cite{Burdakov2017} is not available, so we have excluded it from the corresponding comparisons.

The algorithms in this section all use the stopping criteria
\begin{equation*}
\|\mvec{g}_k\|_{\infty}<10^{-4}, \qquad
k \ge 10^5, \qquad\text{or}\quad
\begin{cases}
\Delta_k < 10^{-15}, \\
t_k < 10^{-15},
\end{cases}
\end{equation*}
depending on whether the algorithm is of line search or trust region type. Here, $t_k$ is the line search step size and $\Delta_k$ denotes the trust-region radius.

Figure~\ref{Fig:PerfProf21} illustrates the performance of the three algorithms mentioned above and \textsf{regLBFGS}. The L-BFGS algorithm with Mor\'e--Thuente line search \cite{More1994} is competitive on the fastest problems. Similar to the results in \cite{Burdakov2017}, however, we found that this and similar Wolfe--Powell based algorithms were noticeably less efficient than others due to the excessive number of function evaluations.

\textsf{regLBFGS} and \textsf{eigLBFGS} perform very similarly in the monotone case, with \textsf{eigLBFGS} (the algorithm based on \cite{Burdakov2017}) attaining a slight advantage. This is not entirely surprising as regularization can be seen as an approximation of trust region algorithms. In return, \textsf{eigLBFGS} requires a (low-dimensional) eigenvalue decomposition in every iteration where the trust region is active, whereas \textsf{regLBFGS} only solves a symmetric linear equation.

\begin{figure}\centering
    \includegraphics[width=0.45\textwidth]{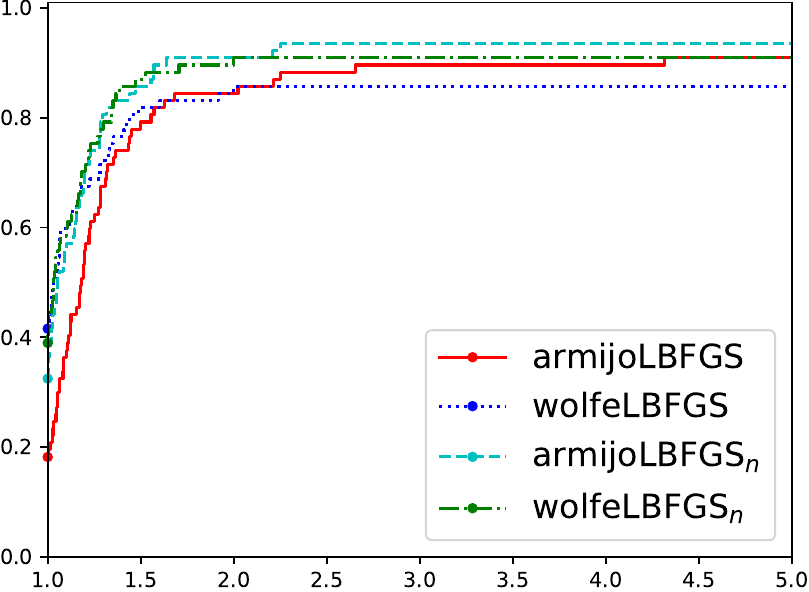}
    ~~
    \includegraphics[width=0.45\textwidth]{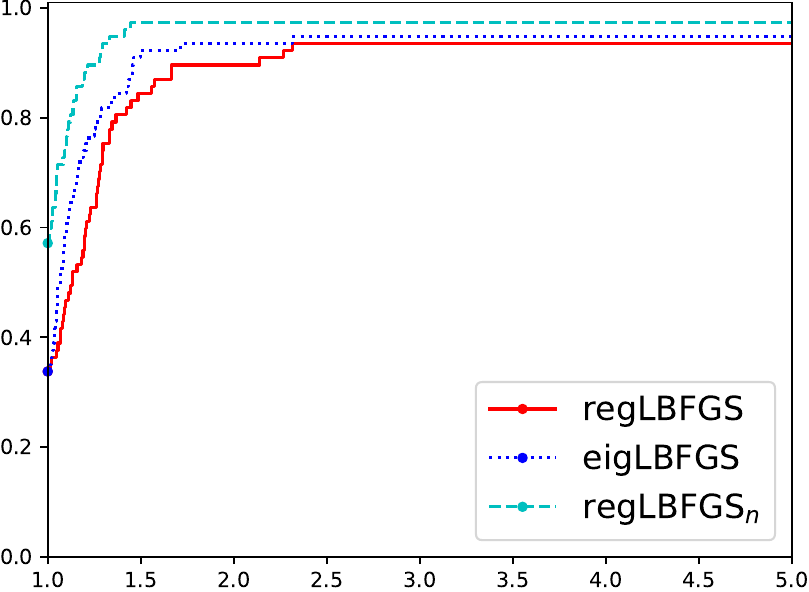}
    \caption{Performance profiles based on the number of function evaluations for \textsf{regLBFGS} and the three algorithms from Section~\ref{Sec:Numerics:Existing}: monotone vs.\ nonmonotone (index $n$) algorithms.}
    \label{Fig:PerfProf22}
\end{figure}

Figure~\ref{Fig:PerfProf22} compares the behavior of monotone and nonmonotone algorithms. The nonmonotone version of \textsf{regLBFGS} seems to outperform both \textsf{eigLBFGS} and nonmonotone versions of \textsf{armijoLBFGS} and \textsf{wolfeLBFGS} (see Figure~\ref{Fig:PerfProf21}).

Note again that our comparison above is based exclusively on function evaluations, not CPU times. It may be interesting to also conduct an analysis of CPU times, but this would effectively require another programming language due to the lack of optimizing compilation in languages like \texttt{Python} or \texttt{MATLAB}, which incurs significant overhead on loops and repeated assignment operations. We anticipate that realistic CPU times would slightly benefit the line search L-BFGS methods due to the logistic effort associated with limited memory updating in the regularized methods (see Section~\ref{Sec:Matrices:BFGS}).


\begin{table}\centering
\begin{tabular}{lllll}
    \toprule
    & \textsf{regLBFGS} & \textsf{armijoLBFGS} & \textsf{wolfeLBFGS} & \textsf{eigLBFGS} \\
    \midrule
    \textnormal{\% accepted steps (monotone)} & 84\% & 59\% & 95\% & 96\% \\
    \textnormal{\% accepted steps (nonmonotone)} & 99\% & 82\% & 95\% & --- \\
    \midrule
    \textnormal{\# problems solved (monotone)} & 72 & 70 & 67 & 73 \\
    \textnormal{\# problems solved (nonmonotone)} & 75 & 73 & 71 & --- \\
    \bottomrule
\end{tabular}
\caption{Average proportion of accepted steps and total problems solved for all algorithms from Section~\ref{Sec:Numerics:Existing}.}
\label{Tab:SuccessfulSteps2}
\end{table}

Finally, Table~\ref{Tab:SuccessfulSteps2} shows the average ratio of accepted steps and total number of solved problems for all algorithms from this section, and \textsf{regLBFGS}. The interpolation-based Mor\'e--Thuente line search achieves around 95\% acceptance in the monotone and nonmonotone implementations. Somewhat unsurprisingly, the trust-region based \textsf{eigLBFGS} algorithms achieves the highest acceptance rate in the monotone case. \textsf{regLBFGS} again stands out with 99\% acceptance in the nonmonotone case.

\begin{remark}[Further improvements]\label{Rem:FurtherImprovements}
    It is possible to incorporate further modifications and improvements into the regularized quasi-Newton schemes, but we have abstained from doing so in order to facilitate a fair comparison. For instance, it may be beneficial to update the quasi-Newton information in \emph{rejected} steps since the trial function value and gradient provide meaningful information \cite{Sugimoto2014}. Note that this technique is covered by the framework of Algorithm~\ref{Alg:RegularizedQN} since we allow $\mmat{B}_k$ to be chosen anew in each iteration.
\end{remark}

\section{Final Remarks}\label{Sec:Final}

The results and numerical evidence in this paper demonstrate conclusively that regularization is a powerful globalization technique for limited memory quasi-Newton methods.

The numerical results in particular indicate that regularization techniques can substantially improve the efficiency and robustness of L-BFGS on large-scale nonlinear problems or when nonmonotonicity strategies are employed. An intuitive explanation of this phenomenon lies in the fact that regularization ``stabilizes'' the Hessian approximation in the sense that the condition number becomes smaller, which may make the method less susceptible to step jumps or ``discontinuities'' induced by nonmonotonicity or extreme nonlinearity.

We hope that the findings presented here will facilitate more research into these techniques, for example, on quantitative convergence results or on how to integrate regularization with BFGS in a full-memory context.

\phantomsection
\addcontentsline{toc}{section}{\refname}

{\footnotesize
\bibliographystyle{abbrv}
\bibliography{LargeScaleQN}
}

\end{document}